\documentclass[11pt]{amsart}
\usepackage{amssymb,amsmath,amsthm}
\usepackage{tikz}
\usepackage{enumitem}
 \usepackage{tikz-cd}
 \usepackage{mathtools}
\usetikzlibrary{positioning}
\usetikzlibrary{matrix,arrows,decorations.pathmorphing, shapes.geometric}
\tikzset{>=latex}
\usepackage{verbatim}

\usepackage{xcolor}

\newtheorem{theorem}[equation]{Theorem}
\newtheorem*{theorem*}{Theorem}
\newtheorem{lemma}[equation]{Lemma}
\newtheorem*{lemma*}{Lemma}
\newtheorem{prop}[equation]{Proposition}

\newtheorem*{corollary*}{Corollary}

\theoremstyle{remark}
\newtheorem{remark}[equation]{Remark}


\numberwithin{equation}{section}

%
%

\newcommand{\Sph}{\mathbb{S}}
\newcommand{\Rcap}{\mathsf{R}}

\newcommand{\N}{\mathbb{N}}

\newcommand{\R}{\mathbb{R}}

\newcommand{\C}{\mathbb{C}}
\newcommand{\HH}{\mathbb{H}}
\newcommand{\ddiv}{\mathrm{div}}

\newcommand{\sn}{\operatorname{sn}}
\newcommand{\ct}{\operatorname{ct}}
\newcommand{\cs}{\operatorname{cs}}

\newcommand{\Ecal}{\mathcal{E}}

\newcommand{\Kcal}{\text{\Tiny $\mathcal{K}$}}
\newcommand{\Kmin}{\underline{K}}
\newcommand{\Kmax}{\overline{K}}

\begin{document}
\title[Symmetry and Isoperimetry]{Symmetry and Isoperimetry for Riemannian Surfaces}

\keywords{Isoperimetric inequalities, boundary rigidity, scattering rigidity, Sobolev inequalities}
\subjclass[2010]{Primary 53C45 Global Surface Theory; Secondary 53C65 Integral Geometry, 53C24 Rigidity Results}

\author[J.~Hoisington]{Joseph~Ansel~Hoisington}
\author[P.~McGrath]{Peter~McGrath}
\date{}
\address{Department of Mathematics, University of Georgia, Athens GA 30602}
\address{Department of Mathematics, North Carolina State University, Raleigh NC} 
\email{jhoisington@uga.edu}
\email{pjmcgrat@ncsu.edu}
\maketitle

\begin{abstract}

For a domain $\Omega$ in a geodesically convex surface, we introduce a scattering energy $\Ecal(\Omega)$, which measures the asymmetry of $\Omega$ by quantifying its incompatibility with an isometric circle action.  We prove several sharp quantitative isoperimetric inequalities involving $\Ecal(\Omega)$ and characterize the domains with vanishing scattering energy by their convexity and rotational symmetry.  We also give a new of the sharp Sobolev inequality for Riemannian surfaces. 
\end{abstract}

\section{Introduction}
\label{intro}
\nopagebreak

In this article, we study symmetry, isoperimetric inequalities and Sobolev inequalities on surfaces.  To begin, let $(\Sigma, g)$ be a Riemannian surface, possibly with boundary, in which each pair of points is joined by a unique, minimizing geodesic.  For distinct $x, y\in \Sigma$,  define a linear map $\Rcap : T_y \Sigma \rightarrow T_x \Sigma$ as follows:  for $v \in T_y\Sigma$, reflect $v$ in $T_y\Sigma$ across the axis orthogonal to the geodesic segment from $y$ to $x$ and define $\Rcap v \in T_x \Sigma$ to be the parallel translate of the result along this segment (see Figure 1).  Let $\Omega \subset \Sigma$ be a precompact domain with $C^2$ boundary.  We define the \emph{scattering energy} $\Ecal(\Omega)$ associated to $\Omega$ by
\begin{align}
\label{EAindex}
\Ecal(\Omega) := \frac{1}{2}\iint_{\partial \Omega \times \partial \Omega}\! | \nu_x- \Rcap \nu_y |^2 \, ds_x ds_y,
\end{align} 
where $\nu_p$ denotes the outward unit normal to $\Omega$ at $p$, $ds_x$ and $ds_y$ are arclength elements along $\partial \Omega$, and by convention we define $(\Rcap \nu_y)(y) = \nu_y$.

\begin{theorem}
\label{Ciso}
For $\Omega$ as above and $\Ecal(\Omega)$ as defined in \eqref{EAindex},
	\begin{equation}
	\label{Eiso}
	L^2 - 4\pi A + ( \sup_{\Omega'} K ) A^2  \geq \Ecal(\Omega),
	\end{equation}
	where $L$ and $A$ are the boundary length and area of $\Omega$, $\Omega'$ is the union of the geodesic segments in $\Sigma$ joining points of $\Omega$, $K$ is the curvature of $\Sigma$, and if $\sup_{\Omega'} K > 0$, then we assume that $diam(\Omega) \leq \pi/ (2\sqrt{\sup_{\Omega'}K})$.  Equality holds if and only if $\Omega'$ has constant curvature. 
	\end{theorem}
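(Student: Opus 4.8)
The plan is to expand the integrand, reduce the inequality to a sharp lower bound on a single double integral, and then evaluate that integral by applying the divergence theorem twice, reserving a Jacobi-field comparison for the one genuinely nontrivial step. Since $\Rcap$ is a linear isometry and $|\nu_x|=|\nu_y|=1$, one has $|\nu_x-\Rcap\nu_y|^2 = 2-2\langle\nu_x,\Rcap\nu_y\rangle$, so that
\begin{equation*}
\Ecal(\Omega) = L^2 - I, \qquad I := \iint_{\partial\Omega\times\partial\Omega}\langle \nu_x,\Rcap\nu_y\rangle\, ds_x\, ds_y,
\end{equation*}
using $\iint_{\partial\Omega\times\partial\Omega}ds_x\,ds_y = L^2$. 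Thus \eqref{Eiso} is equivalent to $I \geq 4\pi A - (\sup_{\Omega'}K)A^2$, and equality in \eqref{Eiso} corresponds to equality here.

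To estimate $I$, I first fix $y\in\partial\Omega$ and regard $x\mapsto\Rcap\nu_y$ as a vector field on $\Omega$, obtained by reflecting and parallel translating $\nu_y$ along the geodesics emanating from $y$; these geodesics lie in $\Omega'$, so the field is defined on all of $\Omega$ with an integrable singularity at $y$. Writing $\nu_y=\langle\nu_y,e_y\rangle e_y+\langle\nu_y,e_y^\perp\rangle e_y^\perp$ for the frame adapted to the geodesic from $y$, the reflection negates the $e_y$-component and parallel transport carries this frame to the radial and angular frame at $x$. In geodesic polar coordinates $(r,\theta)$ centered at $y$, with metric $dr^2+f^2\,d\theta^2$ and $f''+Kf=0$, $f(0)=0$, $f'(0)=1$, a direct computation (in which the angular coefficient differentiates to minus the radial one) gives
\begin{equation*}
\ddiv_x(\Rcap\nu_y) = -\,\frac{\langle\nu_y,e_y\rangle\,(1+f_r)}{f}.
\end{equation*}
The divergence theorem in $x$, with the flux through a small circle excised about $y$ tending to zero, then yields $\int_{\partial\Omega}\langle\nu_x,\Rcap\nu_y\rangle\,ds_x=\int_\Omega\ddiv_x(\Rcap\nu_y)\,dA_x$.

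Next I apply Fubini and recognize the integrand $\langle\nu_y,e_y\rangle(1+f_r)/f$ as, up to sign, $\langle\nu_y,Y_x\rangle$ for the radial field $Y_x(y)=\frac{1+f_r}{f}\,e_y$ on $\Omega$ (now with $x$ fixed), so that a second application of the divergence theorem converts the $y$-integral into $-\int_\Omega\ddiv_y Y_x\,dA_y$. The decisive computation is that of $\ddiv_y Y_x$ in geodesic polar coordinates centered at $x$: the universal $1/s$ behavior of $Y_x$ near the diagonal $y=x$ produces, through the excised circle, a curvature-independent contribution which, after the outer integration in $x$, becomes exactly the $4\pi A$ term; the Jacobi reciprocity identity $f(\ell)=\tilde f(\ell)$ (constancy of the Wronskian of the Jacobi fields based at $y$ and at $x$ along the connecting geodesic of length $\ell$) then organizes the remaining smooth part. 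In the constant-curvature model $K_0$ this smooth part equals exactly $K_0$, giving $\int_\Omega\ddiv_y Y_x\,dA_y=K_0A-4\pi$ and hence $I=4\pi A-K_0A^2$, which already verifies the equality case for constant curvature.

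For variable curvature the main obstacle is the pointwise bound that the smooth part of $\ddiv_y Y_x$ is at most $\sup_{\Omega'}K$. I would prove this by a Sturm/Riccati comparison along each connecting geodesic, using $K\leq\sup_{\Omega'}K$ on $\Omega'$ together with the hypothesis $\mathrm{diam}(\Omega)\leq\pi/(2\sqrt{\sup_{\Omega'}K})$, which keeps the comparison geodesics free of conjugate points and the model quantities (such as $\sqrt{K_0}\cot(\sqrt{K_0}\,t/2)$) positive and monotone throughout the relevant range. Integrating this bound over $\Omega$ and over the outer variable produces the $-(\sup_{\Omega'}K)A^2$ term, and combining with the diagonal contribution yields $I\geq 4\pi A-(\sup_{\Omega'}K)A^2$. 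Finally, equality forces equality in the Sturm comparison along every geodesic joining points of $\Omega$; since these geodesics sweep out $\Omega'$, this is equivalent to $K\equiv\sup_{\Omega'}K$ on $\Omega'$, that is, $\Omega'$ has constant curvature, matching the equality case computed directly above.
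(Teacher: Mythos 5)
Your proposal follows essentially the same route as the paper's proof: you expand $|\nu_x-\Rcap\nu_y|^2$, apply the divergence theorem in $x$ using the formula $\ddiv_x(\Rcap\nu_y)=\frac{\sqrt{g}_r+1}{\sqrt{g}}\langle\nabla_y r,\nu_y\rangle$ (the paper's Lemma \ref{Lcalc}\ref{Lcalc-i}), then apply it again in $y$ to the radial field, which produces the curvature-independent distributional term $4\pi\delta_x$ plus a smooth remainder organized by the Jacobi-field reciprocity $\sqrt{g}(x,y)=\sqrt{g}(y,x)$ (Lemma \ref{Lcalc}\ref{Lcalc-ii}), and finally bound that remainder by $\sup_{\Omega'}K$ via a comparison argument in which the diameter hypothesis keeps the model cotangent nonnegative (Lemma \ref{Lcalc}\ref{Lcalc-iii}), with the same equality analysis along geodesics sweeping out $\Omega'$. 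The steps you leave as gestures---the explicit identity that the smooth part equals $\frac{1}{\sqrt{g}^2}-\Delta_x r\,\Delta_y r$, and its bound by $\sup_{\Omega'}K$ via Rauch and Laplacian comparison---are exactly the content of the paper's Lemma \ref{Lcalc}\ref{Lcalc-ii}--\ref{Lcalc-iii}, and your outline of how to prove them matches the paper's arguments.
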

We will refer to the union of the geodesic segments $\Omega'$ as the {\em geodesic hull} of $\Omega$ in $\Sigma$.  Theorem \ref{Ciso} is a strong form of an isoperimetric inequality due to Bol \cite{Bol}, which extended results of Weil \cite{Weil} and Beckenbach-Rado \cite{Rado} for surfaces of nonpositive curvature.  We refer to \cite{Croke, Kleiner1992, Prince} for background on isoperimetric inequalities in manifolds with upper curvature bounds and \cite{Topping} for more recent results on the isoperimetric inequality on surfaces.  

In Theorem \ref{Trigidity} we characterize the domains with vanishing scattering energy.  We note first that if $\Omega$ is convex and $O(2)$-symmetric, then $\Ecal(\Omega) =0$:  for each pair $x, y \in \partial \Omega$, there is a unique reflection---that is, an orientation-reversing isometry of $\Omega$---exchanging $x$ and $y$.  This reflection maps the geodesic segment between $x$ and $y$ to itself, reversing its orientation, which implies that $\Rcap \nu_y = \nu_x$, and thus that $\Ecal(\Omega) =0$.  The converse also holds: 

\begin{theorem}
\label{Trigidity}
	Suppose that $\Ecal(\Omega) = 0$ and that no pairs of boundary points of $\Omega$ are conjugate.  Then $\Omega$ is strictly convex and admits an isometric action by the orthogonal group $O(2)$.
\end{theorem}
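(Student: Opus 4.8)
The plan is to reconstruct an isometric circle action from the hypothesis $\Ecal(\Omega)=0$, which forces $\Rcap\nu_y=\nu_x$ for \emph{every} pair $x,y\in\partial\Omega$. Let me think about what this condition is really saying and how to leverage it.

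The key geometric content: for each pair $x,y$ on $\partial\Omega$, the scattering map $\Rcap$ (reflect across the perpendicular bisector direction, then parallel transport) sends the outward normal at $y$ exactly to the outward normal at $x$. This is a very rigid pointwise condition holding on all of $\partial\Omega\times\partial\Omega$. My instinct is that this should encode that the geodesic from $x$ to $y$ meets $\partial\Omega$ symmetrically — the same angle at both endpoints — which is exactly the signature of a reflection symmetry of $\Omega$ exchanging $x$ and $y$. So the strategy is: (1) extract the pointwise symmetry statement, (2) produce for each pair a reflection isometry, (3) show these reflections are genuine isometries of a neighborhood/of $\Omega'$ and that their compositions generate an $O(2)$ action, and (4) separately derive strict convexity.

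For convexity, I would differentiate. Fix $y$ and vary $x$ along $\partial\Omega$; the identity $\Rcap\nu_y=\nu_x$ as $x$ moves gives a relation between how $\nu_x$ rotates (the geodesic curvature of $\partial\Omega$ at $x$) and how the parallel-transport/reflection operator changes. The no-conjugate-points assumption guarantees the geodesics vary smoothly and that $\Rcap$ depends differentiably on $x,y$ with no degeneration, so one can legitimately differentiate. I expect the derivative identity to force the geodesic curvature of $\partial\Omega$ to have a definite sign, giving strict convexity. Concretely, one would write the equation in geodesic polar or Fermi coordinates and read off that $k_g>0$.

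\smallskip

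For the symmetry, the main step is to turn the infinitesimal reflection data into an honest isometry. I would argue as follows. Because $\Rcap\nu_y=\nu_x$ holds for all pairs, the perpendicular bisector geodesic of each segment $xy$ should be an axis of a local reflection that preserves $\partial\Omega$; one checks using the no-conjugate-points hypothesis (so the cut locus does not interfere and the exponential map is a diffeomorphism on the relevant region) that these reflections glue to an isometric involution of the geodesic hull $\Omega'$. Fixing one boundary point $x_0$ and letting $y$ range over $\partial\Omega$ produces a one-parameter family of such involutions; composing a reflection fixing $x_0$'s axis with a second reflection yields a rotation, and as $y$ sweeps $\partial\Omega$ these rotations fill out a full circle's worth of isometries. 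Closing this family into a group — verifying it is a smooth one-parameter subgroup together with the reflections, hence a copy of $O(2)$ acting isometrically — is the heart of the argument.

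The hardest part, I expect, is precisely this integration step: upgrading the pointwise/infinitesimal symmetry $\Rcap\nu_y=\nu_x$ into a globally defined isometric $O(2)$ action, rather than merely a collection of unrelated local reflections. The obstacle is showing the local reflections are mutually compatible and extend consistently over $\Omega'$ (and that no-conjugate-points is exactly what rules out branching of the reflection axes). A clean way to organize this is to show that the generating Killing field — obtained by differentiating the rotations at the identity — is globally well defined on $\Omega'$; its existence is equivalent to the $O(2)$ action, and the pointwise scattering identity should be exactly the integrability condition making such a Killing field exist. Once the Killing field is produced, strict convexity of $\Omega$ follows from the sign of $k_g$ established above, completing both conclusions.
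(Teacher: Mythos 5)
Your proposal has a genuine gap at its central step. The condition $\Rcap \nu_y = \nu_x$ constrains only \emph{boundary data}: it says each geodesic chord meets $\partial \Omega$ at equal angles at its two endpoints, from which one can deduce (via the first variation of arclength) that the chord length $r(x,y)$ and the chord angle depend only on the boundary distance $r^{\partial\Omega}(x,y)$. In other words, the boundary distance function (equivalently, the lens/scattering data) is invariant under the rotations of $\partial\Omega$. But this gives you no local reflection isometries of the \emph{metric} to glue: a priori the metric in the interior has no symmetry whatsoever, only its boundary data does, so the "perpendicular bisector reflections" you want to define are not isometries of anything until the theorem is already proved. The passage from rotation-invariant boundary data to an actual isometric action on the interior is precisely boundary distance rigidity for simple surfaces (Pestov--Uhlmann) or scattering rigidity (Wen) --- a deep theorem, not an integrability check for a Killing field. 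This is also where the no-conjugate-points hypothesis genuinely enters: together with strict convexity and positive geodesic curvature of $\partial\Omega$ it makes $\Omega$ a \emph{simple manifold}, which is the setting in which those rigidity theorems apply. (The paper combines this with the explicit Arcostanzo--Michel construction of a rotationally symmetric metric realizing the same boundary distance function, and then rigidity identifies $\Omega$ with that model.) Your proposal, as written, replaces the hardest input with the assertion that local reflections "glue," which is not something that can be verified by elementary means.

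The convexity half of your plan is closer to recoverable but still underdeveloped. The paper first proves (non-strict) convexity by a contradiction argument --- a secant geodesic through three boundary points would be forced, via $\Rcap\nu_x=\nu_y$, $\Rcap\nu_y=\nu_z$, $\Rcap\nu_z=\nu_x$, to be tangent to $\partial\Omega$ at all three points --- and only then shows the geodesic curvature is a positive \emph{constant} by combining the equal-angle and angle-depends-only-on-boundary-distance facts with the limit formula $\kappa = \lim_{t\searrow s}(\theta_1+\theta_2)/(t-s)$. Your "differentiate the identity in Fermi coordinates" sketch does not address why convexity holds in the first place (the chords must stay inside $\Omega$ for the angle bookkeeping to make sense), and a sign on $\kappa$ alone would not rule out $\kappa \equiv 0$, which must be excluded separately.
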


\begin{figure}[h]
	\centering
	\begin{tikzpicture}
	\draw[-, thick, black] (0, 0) to (4, 0);
	\draw[->, thick] (0, 0) node[below] {$y$} -- (-2/3, 2/3) node[left] {$v$};
	\draw[->, thick] (4, 0) node[below] {$x$} -- (4+2/3, 2/3) node[right] {$\Rcap v$};
	\draw[dashed, thin, black] (0, 0) to (0, 1);	
	\draw[->, black, thick] (0, 0) to (2/3, 2/3);	
	\end{tikzpicture}
	\caption{A geodesic segment, $v\in T_y \Sigma$, and $\Rcap v \in T_x \Sigma$.}
\end{figure}
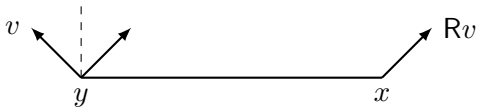

The reflection map $\Rcap$ used to define the scattering energy also leads to a direct proof of a sharp Sobolev inequality in surfaces.  Indeed, if $f$ is a smooth, compactly supported function on $\R^2$, the Cauchy-Schwarz inequality implies
\begin{align*}
\left( \int_{\R^2} |\nabla f|dA \right)^2 \geq \iint_{\R^2 \times \R^2} \langle \nabla_x f , \Rcap \nabla_y f\rangle \,  dA_x dA_y = 4\pi \int_{\R^2} f^2 \, dA.  
\end{align*}
The equality follows by direct calculation---see the proof of Theorem \ref{TSob}---and establishes the Sobolev inequality with the sharp constant.  A similar, more general Sobolev inequality holds in surfaces which satisfy the hypotheses of Theorem \ref{Ciso}.  We state and prove this result in Theorem \ref{TSob}.  Peter Topping has also described to us an elegant direct proof of the Sobolev inequality in the plane, which we outline after the proof of Theorem \ref{TSob}. 

The scattering energy of a domain $\Omega$ depends on the geodesics joining pairs of points $x,y \in \partial \Omega$ in $\Sigma$.  It is therefore intrinsic to $\Omega$ when $\Omega$ is geodesically convex---in this important special case, we prove the following curvature-free isoperimetric inequality:  

\begin{theorem}
\label{Themisphere}	
Suppose that the closure $\overline{\Omega}$ of $\Omega$ is strictly convex, in that all geodesics between pairs of points $x,y \in \overline{\Omega}$ remain in the interior of $\Omega$ except possibly at their endpoints.  Then
	\begin{equation}
	\label{Themi-eqn-2}
	L^{2} - 2\pi A > \Ecal(\Omega).  
	\end{equation}	
\end{theorem}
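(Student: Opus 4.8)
The plan is to reduce \eqref{Themi-eqn-2} to a positivity statement and then to isolate the constant $2\pi A$ exactly. Since $\Rcap$ is a composition of a reflection with a parallel translation, it is a linear isometry, so $|\Rcap\nu_y|=1$ and expanding the square in \eqref{EAindex} gives
\[
\Ecal(\Omega) = L^2 - \iint_{\partial\Omega\times\partial\Omega}\langle\nu_x,\Rcap\nu_y\rangle\,ds_x\,ds_y.
\]
Thus \eqref{Themi-eqn-2} is equivalent to the assertion $\iint_{\partial\Omega\times\partial\Omega}\langle\nu_x,\Rcap\nu_y\rangle\,ds_x\,ds_y > 2\pi A$, and the whole proof becomes a lower bound for this double integral.

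For each fixed $y\in\partial\Omega$ I would view $x\mapsto\Rcap\nu_y$ as a unit vector field $V_y$ on $\overline\Omega$, smooth away from $y$, and apply the divergence theorem on $\Omega$. Strict convexity ensures that geodesic polar coordinates $(r,\psi)$ centered at $y$ cover $\Omega$ with no conjugate points, and a short computation in these coordinates gives $V_y=-\cos\psi\,\partial_r-\sin\psi\,\widehat\psi$ and $\ddiv V_y=-\cos\psi\,(J_r+1)/J$, where $J(r,\psi)$ is the metric coefficient (the Jacobi field with $J(0)=0$, $J_r(0)=1$). The flux of the bounded field $V_y$ through a geodesic circle of radius $\epsilon$ about $y$ is of order $J(\epsilon,\cdot)\to 0$, so the singularity contributes nothing. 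Integrating the divergence identity over $y\in\partial\Omega$ and carrying out the radial integration $\int_0^\rho (J_r+1)\,dr = J(\rho,\theta)+\rho$, I obtain $\iint\langle\nu_x,\Rcap\nu_y\rangle\,ds_x\,ds_y = T_1+T_2$, where
\[
T_1 = \int_{\partial\Omega}\int_{-\pi/2}^{\pi/2}\cos\theta\,J(\rho,\theta)\,d\theta\,ds_y, \qquad T_2 = \int_{\partial\Omega}\int_{-\pi/2}^{\pi/2}\cos\theta\,\rho(y,\theta)\,d\theta\,ds_y,
\]
$\theta$ is the angle at $y$ between the inward normal and the geodesic toward $x$, and $\rho(y,\theta)$ is the length of that geodesic chord.

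The crux is the curvature-free identity $T_2=2\pi A$. Writing the inner $\rho$-integral as an area integral, $T_2 = \int_{\partial\Omega}\int_\Omega \frac{\cos\theta(x,y)}{J(x,y)}\,dA_x\,ds_y$, I would exchange the order of integration and fix an interior point $x$. Introducing geodesic polar coordinates at $x$ and writing $\partial\Omega$ as a radial graph $\phi\mapsto y(\phi)$ (valid by convexity), the standard relation between boundary arclength and angular measure gives $\cos\theta(x,y)\,ds_y = J_x(\rho,\phi)\,d\phi$, where $J_x$ is the Jacobi density for geodesics from $x$. The symmetry of Jacobi fields along a geodesic, $J(x,y)=J(y,x)$ (immediate from the constancy of the Wronskian of the two endpoint-normalized Jacobi fields), shows $J_x(\rho,\phi)=J(x,y)$, so the integrand collapses to $d\phi$ and $\int_{\partial\Omega}\frac{\cos\theta}{J}\,ds_y=\int_0^{2\pi}d\phi=2\pi$ for every interior $x$. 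Hence $T_2=2\pi\int_\Omega dA_x = 2\pi A$.

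Finally, $T_1>0$ strictly, since on the open range $\theta\in(-\pi/2,\pi/2)$ one has $\cos\theta>0$ and $J(\rho,\theta)>0$ (no conjugate points, by strict convexity), so the integrand is positive on a set of positive measure. Combining, $\iint\langle\nu_x,\Rcap\nu_y\rangle\,ds_x\,ds_y = T_1+2\pi A > 2\pi A$, which is \eqref{Themi-eqn-2}. I expect the main obstacle to be the rigorous justification of the identity $T_2=2\pi A$: one must control the singularity of $V_y$ at $y$ in the divergence theorem, verify that strict convexity makes all the geodesic polar systems—based at both boundary and interior points—globally valid with positive Jacobian, and invoke the Jacobi-field symmetry. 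Once $T_2=2\pi A$ is in hand, the strict positivity of $T_1$ makes the conclusion immediate, with the hemisphere appearing exactly as the degenerate limit in which strict convexity fails and $T_1\to 0$.
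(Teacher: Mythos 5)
Your argument is correct and lands on exactly the same identity as the paper, namely $\Ecal(\Omega) = L^{2} - 2\pi A - \int_{\partial\Omega}\int_{-\pi/2}^{\pi/2}\sqrt{g}\,\cos\theta\,d\theta\,ds_y$, with the strictly positive Jacobi-density term supplying the strict inequality; the difference is purely in how you reach it. The paper stays on the boundary: for each $y$ it parametrizes $\partial\Omega$ by the exit angle $\theta$, computes $\langle\Rcap\nu_y,\nu_\beta\rangle = \frac{1}{\tau}(\sqrt{g}\cos\theta - \rho'\sin\theta)$ directly, integrates $\rho'\sin\theta$ by parts, and cites Santal\'o's formula for $\int\int\rho\cos\theta\,d\theta\,ds_y = 2\pi A$. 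You instead push the boundary integral into $\Omega$ with the divergence theorem (essentially the route the paper sketches in Remark \ref{Alt-hemi-exp}), perform the radial integration $\int_0^\rho(\sqrt{g}_r+1)\,dr = \sqrt{g}(\rho,\theta)+\rho$ to produce the two terms $T_1, T_2$, and then prove the Santal\'o-type identity $T_2 = 2\pi A$ from scratch by exchanging the order of integration and using $\cos\theta\,ds_y = \sqrt{g}(x,y)\,d\phi$ together with the symmetry $\sqrt{g}(x,y)=\sqrt{g}(y,x)$. What your route buys is self-containment: the $2\pi A$ term is derived rather than imported, and the mechanism (the cancellation of the two Jacobi densities) is made explicit. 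What it costs is the extra bookkeeping you yourself flag--justifying the divergence theorem with the singular point at $y\in\partial\Omega$ and the global validity of all the polar coordinate systems--all of which strict convexity does handle. One small slip to fix: with $\psi$ measured from the \emph{inward} normal, the reflected field should read $\Rcap\nu_y = \cos\psi\,\partial_r \pm \sin\psi\,\widehat{\psi}$ (the reflection negates the component \emph{along} the geodesic, so the outward normal's $-\cos\psi$ radial component becomes $+\cos\psi$); your stated $V_y=-\cos\psi\,\partial_r-\sin\psi\,\widehat\psi$ has the wrong sign and would make the flux negative, though your final decomposition $T_1+T_2$ carries the correct signs, so this is a transcription error rather than a structural flaw.
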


Theorem \ref{Themisphere} is related to several isoperimetric inequalities of Croke \cite{CrokeEigen, Croke-hemisphere}.  Theorem 11 in \cite{CrokeEigen} implies that any domain $\Omega$ as in Theorem \ref{Themisphere} satisfies the isoperimetric inequality $L^{2} \geq 2\pi A$.  In fact, Croke's result applies to any bounded domain $\Omega$ in a surface $\Sigma$ satisfying the hypotheses of Theorem \ref{Ciso}, whether or not $\Omega$ satisfies the convexity hypothesis in Theorem \ref{Themisphere}.  Equality in Croke's result holds only for domains isometric to round hemispheres.  Although closed round hemispheres do not satisfy the hypotheses of Theorems \ref{Ciso} and \ref{Themisphere}, by considering subdomains of the hemisphere, one can see that $L^{2} > 2\pi A$ is the strongest curvature-independent isoperimetric inequality which holds under these conditions, and that Theorem \ref{Themisphere} is optimal.  The proof of Theorem \ref{Themisphere} implicitly gives a formula for $L^{2} - 2\pi A$ for strictly convex domains.  In \cite{Croke-hemisphere}, Croke gives a formula for $L^{2} - 2\pi A$, which is valid under the hypotheses of Theorems \ref{Ciso} and \ref{Themisphere}, related to the proportion of maximal geodesic segments in $\Omega$ which intersect.  It would be interesting to explore the relationship between the results in \cite{Croke-hemisphere} and Theorems \ref{Ciso} and \ref{Themisphere} in greater depth.

The proof of Theorem \ref{Trigidity}, which characterizes domains $\Omega$ with $\Ecal(\Omega) = 0$, draws on the beautiful theory of boundary and scattering rigidity, in particular, the work of Pestov-Uhlmann \cite{Uhlmann}.   Indeed, the condition $\Ecal(\Omega) = 0$ implies that $\Omega$ is strictly convex, and moreover, that the boundary distance function is invariant under length-preserving maps of the boundary.  Boundary distance rigidity for simple Riemannian surfaces \cite[Theorem 1.1]{Uhlmann} then implies that each such map of $\partial \Omega$ extends to an isometry of $\Omega$, which implies that $\Omega$ is rotationally symmetric.  In fact, an explicit construction of an $O(2)$-invariant metric by Arcostanzo-Michel \cite{Michelrot} which has the same boundary distance function as $\Omega$ gives a complete description of $\Omega$ when $\Ecal(\Omega) = 0$.  The information used to calculate $\Ecal(\Omega)$ is closely related to the {\em scattering map} studied by Wen \cite{Wen}, and one can also use scattering rigidity for simple Riemannian surfaces \cite[Theorem 1.7]{Wen} to deduce the rotational symmetry of a domain with $\Ecal(\Omega)=0$. 

Efforts to understand the stability of isoperimetric inequalities go back at least to the work of Bonnesen \cite{Bonnesen} on the isoperimetric inequality in the plane.  A basic idea of such work is to show that the isoperimetric deficit bounds a nonnegative quantity measuring the asymmetry of the domain, so that a domain with small isoperimetric deficit is then close to being symmetric in a quantitative way.  The definition of $\Ecal(\Omega)$ generalizes such a deficit quantity defined in \cite{McGrath} for domains in constant curvature model surfaces, which extended work of Pleijel \cite{Pleijel} giving a formula for the isoperimetric deficit of a convex plane domain.  We note that Banchoff-Pohl  have \cite{Banchoffpohl} also generalized  Pleijel's result, in a different direction than the results of \cite{McGrath} and this paper.
 We also note that when $\Sigma$ has constant curvature, the definition of $\Rcap \nu_y$ coincides with the rotation by $\frac{\pi}{2}$ of the vector field $V(x,y)$ in H\'{e}lein's proof of the isoperimetric inequality \cite{Helein}. 

Since the breakthrough work \cite{Fusco}, there has been remarkable progress in understanding the stability of the Euclidean isoperimetric inequality.  Recent study has turned to the stability of isoperimetric inequalities in Riemannian manifolds \cite{BogeleinHyperbolic, Fusco-Sph, Chodosh}, and it has been shown \cite{Chodosh} that a natural generalization of the quantitative isoperimetric inequality in \cite{Fusco} does not hold in certain Riemannian manifolds.  In Euclidean space \cite{Fusco} and spaces of constant curvature \cite{Fusco-Sph, BogeleinHyperbolic}, these results quantify the asymmetry of a domain by comparison to a canonical symmetric domain, a ball, by an invariant known as the {\em Fraenkel asymmetry}.  The scattering energy quantifies the asymmetry of a domain $\Omega$ entirely in terms of the geometry of $\Omega$ and its geodesic hull.  

In Theorem \ref{Theuc}, we briefly study the scattering energy of higher-dimensional domains.  

\subsection*{Notation and Conventions} 
$\phantom{ab}$
\nopagebreak

Given $x,y \in \Sigma$, we let $r$ denote the distance between $x$ and $y$, and we denote differentiation with respect to $x$ or $y$ by appropriate subscripts.

For $x, y \in \Sigma$, we write $\sqrt{g}(x, y)$ to denote the function giving the volume element in normal coordinates about $x$ and evaluated at $y$.  Thus, the metric in these coordinates is $dr^{2} + \sqrt{g}^{2}(x,y) d\theta^{2}$.  By \cite[Lemma 5]{Yau}, $\sqrt{g}(x,y) = \sqrt{g}(y,x)$.  For this reason, we simply write $\sqrt{g}$ when there is no risk of confusion. 

\subsection*{Acknowledgements}
$\phantom{ab}$
\nopagebreak

We are very happy to thank Christopher Croke, Joseph H.G. Fu and Robert Kusner for their interest in this work and several helpful discussions.  We thank Peter Topping for sharing with us a complex analytic proof of the sharp Euclidean Sobolev inequality (see Section \ref{SSob}).


\section{Proof of Theorem \ref{Ciso}}
\label{dom-in-conv}

\begin{lemma}
\label{Lcalc}
Given distinct $x, y \in \Sigma$, define $w(y;x) := 1/\sqrt{g} + \Delta_x r$.  Then
\begin{enumerate}[label=\emph{(\roman*)}]
\item
\label{Lcalc-i} 
For any $v_y \in T_y \Sigma$, $\ddiv_x ( \Rcap v_y ) =  w(y;x)\langle \nabla_y r, v_y \rangle$.   
\item 
\label{Lcalc-ii} 
$\ddiv_y(w(y;x) \nabla_y r) = 4\pi \delta_x - 1/ \sqrt{g}^2 + \Delta_x r \Delta_y r$, where $\delta_x$ is the Dirac delta distribution centered at $x$. 
\item
\label{Lcalc-iii} Let $\Gamma$ be the geodesic segment joining $x$ and $y$.  Then 
\begin{align*}
\min_\Gamma K \leq \frac{1}{\sqrt{g}^2} - \Delta_x r \Delta_y r \leq \max_\Gamma K,
\end{align*}
where $K$ is the curvature of $\Sigma$, and if $\max_\Gamma K>  0$, we assume that $r(x, y)< \pi /  (2\sqrt{\max_\Gamma K})$.  Equality holds in each inequality if and only if $K$ is constant on $\Gamma$.
\end{enumerate}
\end{lemma}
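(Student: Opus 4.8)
The plan is to set up geodesic polar coordinates centered at one of the two points and to reduce every claim to a computation with the radial Jacobi field along the segment $\Gamma$. Throughout, I would write $\sqrt{g}(x,y) = s(r)$, where $s$ solves the Jacobi equation $u'' + Ku = 0$ along $\Gamma$ with $s(0) = 0$, $s'(0) = 1$ (the sine-type field based at $x$), and let $c$ be the companion solution with $c(0) = 1$, $c'(0) = 0$; their Wronskian $cs' - sc'$ is constant and equal to $1$. The two distance Laplacians then have the explicit descriptions $\Delta_y r = s'(r)/s(r)$ and $\Delta_x r = c(r)/s(r)$, the asymmetry between them reflecting that $\Delta_y r$ differentiates in the variable based at $x$ while $\Delta_x r$ is controlled by the cosine-type field. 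Establishing these two formulas (the first is standard; the second I would obtain by expressing the sine field based at $y$ in terms of $c,s$) is the common input to all three parts.

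For \ref{Lcalc-i}, I would fix $y$ and $v_y$ and vary $x$, working in geodesic polar coordinates $(r,\theta)$ about $y$, so that $x = \exp_y(r\,\omega(\theta))$. Decomposing $v_y = \alpha(\theta)\,\omega + \beta(\theta)\,\omega^\perp$ in the rotating frame gives $\alpha' = \beta$, $\beta' = -\alpha$, and by the definition of $\Rcap$ (reflect the radial component, then parallel transport) one finds $\Rcap v_y = -\alpha\,\partial_r + (\beta/\sqrt{g})\,\partial_\theta$. Applying the polar divergence formula $\ddiv_x X = \frac{1}{\sqrt{g}}\big(\partial_r(\sqrt{g}X^r) + \partial_\theta(\sqrt{g}X^\theta)\big)$ and using $\partial_\theta\beta = -\alpha$ collapses everything to $-\alpha\big(\tfrac{\partial_r\sqrt{g}}{\sqrt{g}} + \tfrac{1}{\sqrt{g}}\big)$; since $-\alpha = \langle \nabla_y r, v_y\rangle$ and $\partial_r\sqrt{g}/\sqrt{g} = \ddiv_x(\partial_r) = \Delta_x r$, this is exactly $w(y;x)\langle \nabla_y r, v_y\rangle$.

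For \ref{Lcalc-ii}, I would fix $x$ and vary $y$ in polar coordinates about $x$, so $w\,\nabla_y r = w\,\partial_r$ is radial and $\ddiv_y(w\,\partial_r) = \frac{1}{\sqrt{g}}\partial_r(\sqrt{g}\,w) = \frac{1}{\sqrt{g}}\partial_r(1 + \sqrt{g}\,\Delta_x r) = \Delta_x r\,\Delta_y r + \partial_r\Delta_x r$ away from $x$. Constancy of the Wronskian gives $\partial_r\Delta_x r = \partial_r(c/s) = -1/s^2 = -1/\sqrt{g}^2$, producing the smooth part $\Delta_x r\,\Delta_y r - 1/\sqrt{g}^2$. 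The Dirac term is then accounted for at $y = x$: there both $1/\sqrt{g}$ and $\Delta_x r$ are asymptotic to $1/r$, so $w\,\nabla_y r \sim (2/r)\,\partial_r$, and computing the flux of this singular radial field through small geodesic circles yields the distributional contribution $4\pi\,\delta_x$ (each $1/r$ radial field contributing $2\pi$).

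For \ref{Lcalc-iii}, I would compare the individual quantities $\Delta_x r = c/s$, $\Delta_y r = s'/s$, and $1/\sqrt{g}^2 = 1/s^2$ against their constant-curvature counterparts for $K \equiv M := \max_\Gamma K$ and $K \equiv m := \min_\Gamma K$, via the Jacobi and Laplacian comparison theorems: $K \le M$ forces $s \ge s_M$, $\Delta_x r \ge \Delta_x^M r$, and $\Delta_y r \ge \Delta_y^M r$, while $K \ge m$ reverses all three. The hypothesis $r < \pi/(2\sqrt{M})$ keeps $s, c > 0$ and the comparison cotangents positive, so all four Laplacians are positive and the products of the monotone inequalities may be combined: $1/s^2 - \Delta_x r\,\Delta_y r \le 1/s_M^2 - \Delta_x^M r\,\Delta_y^M r = M$, and symmetrically the lower bound $\ge m$. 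The equality statement follows because equality in any of the Sturm comparisons forces $K$ to be constant on $\Gamma$. I expect \ref{Lcalc-iii} to be the main obstacle: it is where the asymmetry of $\Delta_x r$ and $\Delta_y r$ and the positivity afforded by the conjugate-point hypothesis are both essential, since a naive attempt to realize $\frac{1}{\sqrt{g}^2} - \Delta_x r\,\Delta_y r$ as a single curvature average with positive weight fails (the exact identity has normalization $s(r)$, not $\int_0^r c$), and only the combination of the three separate monotone comparisons yields the clean two-sided bound.
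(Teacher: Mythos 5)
Your proposal is correct and follows essentially the same route as the paper: polar coordinates and the polar divergence formula for \ref{Lcalc-i}, a radial computation about $x$ plus a flux computation for the $4\pi\delta_x$ term in \ref{Lcalc-ii}, and the Rauch/Laplacian comparisons combined with the identity $\sn_K^{-2}-\ct_K^2=K$ for \ref{Lcalc-iii}. The only cosmetic difference is in \ref{Lcalc-ii}, where you obtain $\partial_r\Delta_x r=-1/\sqrt{g}^2$ from constancy of the Wronskian of the sine- and cosine-type Jacobi solutions, while the paper derives the same identity from the explicit representation $u(s,t)=u(0,s)u(0,t)\int_s^t u(0,l)^{-2}\,dl$ of the second solution.
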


\begin{proof}

	For {\em \ref{Lcalc-i}}, we may assume without loss of generality that $|v_y| = 1$.  Now take polar coordinates $(r, \theta)$ centered at $y$ in which $\langle v_y, \nabla_y r\rangle  = \cos \theta$.  In these coordinates, 
\begin{align*}
\Rcap v_y = \cos \theta \frac{\partial}{\partial r} + \frac{\sin \theta}{\sqrt{g}} \frac{\partial}{\partial \theta}.
\end{align*}
We then calculate
\begin{equation}
\label{Lcalc-i-eqn}
\ddiv_x \Rcap v_y = \frac{\sqrt{g}_{r}  +1}{\sqrt{g}} \cos \theta = \frac{ \sqrt{g}_r+1}{\sqrt{g}} \langle \nabla_y r, v_y \rangle.
\end{equation}

	To establish {\em \ref{Lcalc-ii}}, first suppose that $x\neq y$.  Since $\sqrt{g}(x, y) = \sqrt{g}(y, x)$, a calculation in polar coordinates about $x$ reveals that $\ddiv_y( \nabla_y r/ \sqrt{g}) = 0$, so it remains to compute the divergence of $\Delta_x r \nabla_y r$.  We have
\begin{align*}
\ddiv_y( \Delta_x r \nabla_y r) &= \Delta_x r \,  \ddiv_y( \nabla_y r) + \langle \nabla_y \Delta_x r , \nabla_y r \rangle\\ 
& = \Delta_x r \Delta_y r + \langle \nabla_y \Delta_x r, \nabla_y r\rangle.
\end{align*}

	Consider the geodesic segment $\Gamma: [0, r]\rightarrow \Sigma$ with $\Gamma(0) = x$ and $\Gamma(r) = y$.  For any $0\leq l_{1} < l_{2} \leq r$, we define $u(l_{1},l_{2}) := \sqrt{g}(\Gamma(l_{1}), \Gamma(l_{2}))$.  In this notation, $\Delta_x r = - \frac{\partial \log u}{\partial l_{1}}(0, r)$ and $\Delta_y r = \frac{\partial \log u}{\partial l_{2}}(0, r)$.  Moreover, 
\begin{align*}
\langle \nabla_y \Delta_x r, \nabla_y r\rangle =-  \frac{\partial^2 \log u }{\partial l_{1} \partial l_{2}}(0, r).
\end{align*}
	For fixed $s \in [0,r]$, $u(s,t)$ solves the Jacobi equation $u_{tt} + K u =0$, with initial values $u(s, s) = 0$ and $\lim_{t \searrow s} u_t(s, t) = 1$.  Therefore, 
\begin{align*}
u(s, t) = u(0, s)u(0, t) \int_{s}^t \frac{1}{u^2(0, l)}dl.
\end{align*}
	From this it follows that
\begin{align*}
\frac{ \partial \log u}{\partial l_{2}}(s, r)= \Delta_y r + \frac{u(0, s)}{u(0, r) u(s,r)}, 
\end{align*}
where $\Delta_y r = \frac{\partial \log u}{\partial l_{2}}(0,r)$ is independent of $s \in [0,r]$.  We then calculate
\begin{align*}
-\langle \nabla_y \Delta_x r, \nabla_y r\rangle = \left. \frac{\partial}{\partial s}\right|_{s= 0} \left(  \Delta_y r +\frac{u(0, s)}{u(0, r) u(s,r)}\right)= \frac{1}{u(0, r)^2} = \frac{1}{\sqrt{g}^2}.
\end{align*}
	This completes the calculation of $\ddiv_y(w(y;x) \nabla_y r)$ when $x\neq y$.  To justify the distributional term $4\pi \delta_x$, we must prove that
\begin{equation}
\label{4-pi-delta}
\lim_{\varepsilon \searrow 0} \int_{\partial B_{\varepsilon}(x)} w(y;x)\,  ds_y =  4\pi.
\end{equation}
This follows by direct calculation using the formula for $w(y;x)$, using that $ds_y = \sqrt{g}\,  d\theta$ in polar coordinates about $x$.

It is helpful to note that $\ddiv_y( w(y;x) \nabla_y r)$ is symmetric in $x$ and $y$, even though $w(y;x)$ generally is not.

To establish {\em \ref{Lcalc-iii}}, we let $\sn_{\Kcal}(r)$ be the length of a Jacobi field $J(r)$ with initial conditions $J(0) = 0$, $|J'(0)| = 1$ in the model surface with constant curvature $\mathcal{K}$.  Thus, $\sn_{\Kcal}(r) = \frac{1}{\sqrt{\Kcal}} \sin \sqrt{\Kcal} r$ if $\mathcal{K} > 0$, $\sn_{\Kcal}(r) = r$ if $\mathcal{K} = 0$ and $\sn_{\Kcal}(r) = \frac{1}{\sqrt{|\Kcal|}} \sinh \sqrt{|\Kcal|} r$ if $\mathcal{K} < 0$.  We let $\cs_{\Kcal}(r) = \sn_{\Kcal}'(r)$ and $\ct_{\Kcal}(r) = \frac{\cs_{\Kcal}(r)}{\sn_{\Kcal}(r)}$.  To simplify notation, let $\Kmin = \min_\Gamma K$ and $\Kmax = \max_\Gamma K$. 
	
By the Rauch and Laplacian comparison theorems, 
\begin{equation}
\label{Ecomp}
\begin{gathered}
\sn_{\Kmax}(r) \leq  \sqrt{g} \ \leq \sn_{\Kmin}(r), \\
 \ct^2_{\Kmax}(r) \leq  \Delta_x r \Delta_y r  \leq \ct^2_{\Kmin}(r).  
 \end{gathered}
\end{equation}
	We then estimate 
\begin{align*}
\Kmin  = \frac{1}{\sn^2_{\Kmin}(r)} - \ct_{\Kmin}^{2}(r) 
 \leq  \frac{1}{\sqrt{g}^2} - \Delta_x r \Delta_y r 
 \leq  \frac{1}{\sn_{\Kmax}^{2}(r)} - \ct_{\Kmax}^{2}(r)  =  \Kmax.
\end{align*} 
	Each inequality is an equality if and only if $K$ is constant along $\Gamma$.
\end{proof}

\begin{prop}
\label{Tdeficit}
For $\Omega$ as above,
\begin{align}
\label{EA}
\Ecal(\Omega) = L^2 - 4\pi A + \iint_{\Omega\times \Omega} \frac{1}{\sqrt{g}^2(x,y)} - \Delta_x r \Delta_y r \, dA_x dA_y,
\end{align}
where $A$ is the area of $\Omega$ and $L$ is the boundary length. 
\end{prop}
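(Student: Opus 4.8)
The plan is to expand the integrand defining $\Ecal(\Omega)$ and then convert the resulting double boundary integral into an interior integral by two successive applications of the divergence theorem, one in $x$ and one in $y$, fed by the two parts of Lemma~\ref{Lcalc}. First I would use that $\Rcap$ is a linear isometry---a reflection composed with parallel translation, both of which preserve inner products---so that $|\Rcap \nu_y| = |\nu_y| = 1$ and hence
\[
|\nu_x - \Rcap \nu_y|^2 = 2 - 2\langle \nu_x, \Rcap \nu_y\rangle.
\]
Since $\iint_{\partial\Omega\times\partial\Omega} 1\, ds_x\, ds_y = L^2$, this reduces the claim to showing
\[
\iint_{\partial\Omega\times\partial\Omega}\langle \nu_x, \Rcap \nu_y\rangle\, ds_x\, ds_y = 4\pi A - \iint_{\Omega\times\Omega}\Big(\tfrac{1}{\sqrt{g}^2} - \Delta_x r\,\Delta_y r\Big)\, dA_x\, dA_y.
\]

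Next I would fix $y \in \partial\Omega$ and view $\Rcap \nu_y$ as a vector field in $x$ on $\Omega$. The divergence theorem in $x$ together with Lemma~\ref{Lcalc}\ref{Lcalc-i} turns the inner boundary integral into an interior one:
\[
\int_{\partial\Omega}\langle \nu_x, \Rcap \nu_y\rangle\, ds_x = \int_\Omega \ddiv_x(\Rcap \nu_y)\, dA_x = \int_\Omega w(y;x)\,\langle \nabla_y r, \nu_y\rangle\, dA_x.
\]
Integrating over $y \in \partial\Omega$ and exchanging the order of integration---justified because $w(y;x) \sim 1/r$ near $x=y$ with $y$ on the boundary, so the integrand is integrable on $\Omega \times \partial\Omega$---I arrive at
\[
\iint_{\partial\Omega\times\partial\Omega}\langle \nu_x, \Rcap \nu_y\rangle\, ds_x\, ds_y = \int_\Omega\Big(\int_{\partial\Omega}\langle w(y;x)\nabla_y r, \nu_y\rangle\, ds_y\Big)\, dA_x.
\]

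For each fixed $x \in \Omega$ the inner integral is the flux of $w(y;x)\nabla_y r$ through $\partial\Omega$, which I would evaluate by the divergence theorem in $y$ combined with Lemma~\ref{Lcalc}\ref{Lcalc-ii}. Because $x$ lies in the interior of $\Omega$, the distributional term $4\pi\delta_x$ contributes exactly $4\pi$, giving
\[
\int_{\partial\Omega}\langle w(y;x)\nabla_y r, \nu_y\rangle\, ds_y = 4\pi - \int_\Omega\Big(\tfrac{1}{\sqrt{g}^2} - \Delta_x r\,\Delta_y r\Big)\, dA_y.
\]
Integrating over $x \in \Omega$ and substituting back then produces the $-4\pi A$ term and the remaining double area integral, which is the assertion.

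The step deserving the most care, and the main obstacle, is this second application of the divergence theorem: the field $w(y;x)\nabla_y r$ is singular at $y=x \in \Omega$, and it is exactly the correct accounting of the resulting $4\pi\delta_x$ via Lemma~\ref{Lcalc}\ref{Lcalc-ii} that generates the $-4\pi A$ contribution. Concretely, I would excise a geodesic ball $B_\varepsilon(x)$, apply the classical divergence theorem on $\Omega \setminus B_\varepsilon(x)$, and use \eqref{4-pi-delta} to identify the limit of the boundary flux over $\partial B_\varepsilon(x)$ as $\varepsilon \searrow 0$. The two points I would verify most carefully are the orientation bookkeeping for the inner normal on $\partial B_\varepsilon(x)$ (which fixes the sign of the $4\pi$) and the integrability needed to invoke Fubini when exchanging the order of integration.
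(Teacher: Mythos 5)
Your proposal is correct and follows essentially the same route as the paper's proof: expand $\tfrac12|\nu_x-\Rcap\nu_y|^2 = 1-\langle\nu_x,\Rcap\nu_y\rangle$, apply the divergence theorem in $x$ via Lemma~\ref{Lcalc}\ref{Lcalc-i}, exchange the order of integration, and then apply the divergence theorem in $y$ via Lemma~\ref{Lcalc}\ref{Lcalc-ii}, with the distributional term $4\pi\delta_x$ producing the $-4\pi A$. Your extra attention to the excision of $B_\varepsilon(x)$ and the Fubini justification is more explicit than the paper's treatment but does not change the argument.
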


\begin{proof}
 $\Rcap \nu_y$ has unit length, so $\frac{1}{2}|\nu_x - \Rcap\nu_y |^2 = 1 - \langle \Rcap \nu_y , \nu_x\rangle$.  Integrating and using the divergence theorem with Lemma \ref{Lcalc}(i) gives
\begin{align*}
\frac{1}{2}\int_{x\in \partial \Omega}| \nu_x- \Rcap \nu_y |^2 \, ds_x  
&= L - \int_{x\in \Omega} \ddiv_x \left( \Rcap \nu_y \right) dA_x \\
&= L - \int_{x\in \Omega}\left \langle w(y;x) \nabla_y r, \nu_y \right \rangle dA_x.
\end{align*}
Integrating again, using Fubini's theorem and the divergence theorem, 
\begin{align*}
\Ecal(\Omega) = L^2 -  \int_{x\in \Omega} \int_{y\in  \Omega} \ddiv_y\left( w(y;x)\nabla_y r\right) dA_y dA_x.
\end{align*} 
Working Lemma \ref{Lcalc}(ii) into the above establishes \eqref{EA}.
\end{proof}

\begin{proof}[Proof of Theorem \ref{Ciso}]
Using Lemma \ref{Lcalc}(iii) and \eqref{EA}, we have
\begin{align*}
L^2 - 4\pi A +\iint_{\Omega\times \Omega} \max_{\Gamma(x,y)} K \, dA_x dA_y \geq \Ecal(\Omega),
\end{align*}
which implies \eqref{Eiso}.  Equality requires equality in Lemma \ref{Lcalc}(iii) along all geodesic segments $\Gamma(x,y)$, for all $x,y \in \Omega$.  This requires that $\Sigma $ have constant curvature along all such geodesic segments.  Since any two segments $\Gamma(x,y)$ and $\Gamma(z,w)$ can be joined by a third such segment $\Gamma(x,z)$, this implies that $\Omega'$ has constant curvature. \end{proof}


\section{Proof of the Sobolev Inequality}
\label{SSob}
\nopagebreak

In this section, we use Lemma \ref{Lcalc} and an analysis of the reflection map $\Rcap$ to prove a sharp Sobolev inequality. 

\begin{theorem}
	\label{TSob}
	Let $f$ be a compactly supported, smooth function on $\Sigma$, and if $\sup_{\Sigma} K> 0$, suppose $\emph{diam}(\emph{supp}(f)) \leq \pi/ (2\sqrt{\sup_{\Sigma}K})$.  If $\partial \Sigma \neq \emptyset$, we assume $\emph{supp}(f) \cap \partial \Sigma = \emptyset$.  Then
	\begin{align*}
	4\pi ||f||_{L^{2}}^{2} \leq || \nabla f ||_{L^{1}}^{2} + ( \sup_{\Sigma} K) ||f||_{L^{1}}^{2}. 
	\end{align*}
\end{theorem}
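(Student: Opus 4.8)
The plan is to follow the coarea/divergence strategy already used in Proposition \ref{Tdeficit}, but adapted to a function $f$ rather than the indicator of a domain $\Omega$. The key idea is to express $4\pi \|f\|_{L^2}^2$ as a double integral over $\Sigma \times \Sigma$ against the distributional identity in Lemma \ref{Lcalc}(ii), then recognize the resulting integrand as an inner product that can be bounded by Cauchy--Schwarz. First I would use Lemma \ref{Lcalc}(ii), namely $\ddiv_y(w(y;x)\nabla_y r) = 4\pi \delta_x - 1/\sqrt{g}^2 + \Delta_x r \Delta_y r$, integrated against $f^2(x) f^2(y)$ or, more precisely, against a product of $f$ values in each variable. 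The cleanest route is probably to write, for each fixed $x$,
\begin{align*}
4\pi f^2(x) = \int_{y \in \Sigma} f^2(y)\,\ddiv_y(w(y;x)\nabla_y r)\, dA_y + \iint \text{(correction terms)},
\end{align*}
integrate by parts in $y$ to move the divergence onto $f^2(y)$, and pick up a term $\langle \nabla_y(f^2), w(y;x)\nabla_y r\rangle = 2 f(y)\langle \nabla_y f, w(y;x)\nabla_y r\rangle$; the compact support hypothesis (disjoint from $\partial\Sigma$) kills all boundary contributions.

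After integrating in $x$ as well and applying Fubini, the $4\pi \delta_x$ term produces exactly $4\pi \|f\|_{L^2}^2$, while the remaining $-1/\sqrt g^2 + \Delta_x r\Delta_y r$ term is controlled from above by $\sup_\Sigma K$ using Lemma \ref{Lcalc}(iii), contributing the $(\sup_\Sigma K)\|f\|_{L^1}^2$ term once paired with $f^2(x)f^2(y)$ — this is where the diameter hypothesis $\mathrm{diam}(\mathrm{supp}(f)) \le \pi/(2\sqrt{\sup_\Sigma K})$ is needed to validate Lemma \ref{Lcalc}(iii). The heart of the argument is the Cauchy--Schwarz step: using Lemma \ref{Lcalc}(i), which gives $\ddiv_x(\Rcap v_y) = w(y;x)\langle \nabla_y r, v_y\rangle$, I would identify $w(y;x)\nabla_y r$ with a reflected gradient and bound the resulting double integral of the form $\iint f(x) f(y)\langle \nabla_x f, \Rcap \nabla_y f\rangle$ (or its symmetrization) by $\left(\int |\nabla f|\, dA\right)^2$, mimicking the Euclidean computation displayed in the introduction. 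The reflection map $\Rcap$ being a linear isometry onto its image is exactly what makes $|\Rcap \nabla_y f| = |\nabla_y f|$, so that Cauchy--Schwarz produces $\|\nabla f\|_{L^1}^2$ on the nose.

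The main obstacle I anticipate is the bookkeeping at the diagonal $x = y$ and near it: the identity in Lemma \ref{Lcalc}(ii) is distributional, $w(y;x)$ is singular as $y \to x$, and the integration by parts must be justified by the same $\varepsilon$-ball excision used to prove \eqref{4-pi-delta}. I would handle this by cutting out $B_\varepsilon(x)$, performing the divergence theorem on $\Sigma \setminus B_\varepsilon(x)$, and taking $\varepsilon \searrow 0$, checking that the boundary term on $\partial B_\varepsilon(x)$ converges to $4\pi f^2(x)$ (by \eqref{4-pi-delta} and continuity of $f$) while the other singular contributions vanish. A secondary subtlety is ensuring the Cauchy--Schwarz step is applied to the correct symmetrized bilinear form so that the cross term assembles into a perfect square $\left(\int|\nabla f|\,dA\right)^2$ rather than something merely comparable; getting the factors of $f(x)$, $f(y)$ and the constant $4\pi$ to line up exactly is what delivers the \emph{sharp} constant, and I would verify equality in the Euclidean model to confirm no slack is introduced.
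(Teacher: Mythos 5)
Your overall strategy is the paper's: the proof does run through Lemma \ref{Lcalc}(i)--(iii), a double integration by parts over $\Sigma\times\Sigma$, and a pointwise Cauchy--Schwarz using that $\Rcap$ is a linear isometry. The paper simply runs the chain in the opposite direction, starting from
\begin{align*}
\Bigl( \int_{\Sigma} |\nabla f|\,dA \Bigr)^2 \;\geq\; \iint_{\Sigma\times\Sigma} \langle \nabla_x f , \Rcap \nabla_y f\rangle \, dA_x dA_y,
\end{align*}
and integrating by parts twice (via Lemma \ref{Lcalc}(i)) to arrive at $\iint f(x) f(y)\, \ddiv_y( w(y;x)\nabla_y r)\, dA_y dA_x$, to which Lemma \ref{Lcalc}(ii)--(iii) applies. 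However, your bookkeeping contains a concrete error that would derail the argument as written. The correct test function for the distributional identity is $f(x)f(y)$, not $f^2(y)$ (nor $f^2(x)f^2(y)$): pairing $\ddiv_y(w(y;x)\nabla_y r)$ with $f(x)f(y)$ and integrating over both variables makes the $4\pi\delta_x$ term produce $4\pi\int f(x)^2\,dA_x = 4\pi\|f\|_{L^2}^2$, makes the $-1/\sqrt{g}^2+\Delta_x r\Delta_y r$ term produce $-(\sup_\Sigma K)\bigl(\int f\bigr)^2 = -(\sup_\Sigma K)\|f\|_{L^1}^2$, and --- crucially --- makes the two integrations by parts land exactly on $\iint \langle\nabla_x f,\Rcap\nabla_y f\rangle$ with \emph{no} $f$-weights left in the integrand. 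With your weight $f^2(y)$ and implicit weight $1$ in $x$, the $x$-integration by parts has nothing to differentiate, and the divergence theorem kills the term entirely.

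Relatedly, the inequality you call the heart of the argument,
\begin{align*}
\iint_{\Sigma\times\Sigma} f(x) f(y)\,\langle \nabla_x f, \Rcap \nabla_y f\rangle\, dA_x dA_y \;\leq\; \Bigl(\int_\Sigma |\nabla f|\,dA\Bigr)^2,
\end{align*}
is false in general: pointwise Cauchy--Schwarz only gives the bound $\bigl(\int f\,|\nabla f|\,dA\bigr)^2$, which is not comparable to $\|\nabla f\|_{L^1}^2$ unless $f\leq 1$. The unweighted integrand $\langle\nabla_x f,\Rcap\nabla_y f\rangle$ is what Cauchy--Schwarz controls by $\|\nabla f\|_{L^1}^2$, and it is the integration by parts (not the Cauchy--Schwarz step) that introduces the factors $f(x)f(y)$ on the other side of the chain. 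Finally, note two points you omit: one must first reduce to $f\geq 0$ (by smoothly approximating $|f|$), since the estimate $\iint f(x)f(y)\bigl(\ddiv_y(w\nabla_y r)-4\pi\delta_x\bigr) \geq -(\sup_\Sigma K)\|f\|_{L^1}^2$ requires $f(x)f(y)\geq 0$; and your proposed $\varepsilon$-excision near the diagonal, while a reasonable way to justify the distributional identity, is exactly the content of \eqref{4-pi-delta} and need not be redone.
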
 
\begin{proof}
	By smoothly approximating $|f|$, it is enough to prove the inequality for non-negative functions.  By the Cauchy-Schwarz inequality, Fubini's theorem, the divergence theorem, and Lemma \ref{Lcalc}, which implies in particular that $\ddiv_y(w(y; x) \nabla_y r) \geq 4\pi \delta_x - \sup_\Sigma K$, we then have
	\begin{align*}
	\left( \int_{\Sigma} |\nabla f|dA \right)^2 &\geq \int_{\Sigma} \int_{\Sigma} \langle \nabla_x f , \Rcap \nabla_y f\rangle \,  dA_x dA_y\\
	&= - \int_{\Sigma} \int_{\Sigma} f(x)\,  \ddiv_x ( \Rcap \nabla_y f ) \, dA_xdA_y\\
	&= - \int_{\Sigma} f(x) \int_{\Sigma} \langle w(y;x)  \nabla_y r, \nabla_y f \rangle \, dA_y dA_x\\
	&= \int_{\Sigma} f(x) \int_{\Sigma} f(y) \, \ddiv_y( w(y; x) \nabla_y r)\,  dA_y dA_x\\
	&\geq 4\pi \int_{\Sigma} f^2 \, dA  -  (\sup_\Sigma K) \left( \int_{\Sigma} f dA\right)^2.
	\end{align*}
\end{proof}

By smoothly approximating the characteristic function of a compact domain, one can infer the sharp isoperimetric inequality $L^{2} - 4\pi A + \Kmax A^{2} \geq 0$ for surfaces with $K \leq \Kmax$ as a corollary of Theorem \ref{TSob}, albeit without the lower bound in terms of $\Ecal(\Omega)$ in Theorem \ref{Ciso}.  Conversely, Howard has shown \cite{Howard} that any isoperimetric inequality on a Riemannian surface of the form $L^{2} - c_1 A + c_2 A^{2} \geq 0$ implies a corresponding Sobolev inequality $|| \nabla f ||_{L^{1}}^{2} - c_1 ||f||_{L^{2}}^{2} + c_2 ||f||_{L^{1}}^{2} \geq 0$ for compactly supported functions of bounded variation.  

We conclude this section with another proof, pointed out to us by Peter Topping (see also the proof of \cite[Lemma 1]{ToppingWente}), of the special case of Theorem \ref{TSob} when $\Sigma$ is the Euclidean plane.  Identify $\R^2$ with $\C$ and consider the usual complex coordinate $z = x+iy$, which satisfies $dx \wedge dy = \frac{i}{2} dz\wedge d\bar{z}$.  Let $\partial_{z} = \frac{1}{2}(\partial_{x} - i\partial_{y})$ and $\partial_{\bar{z}} = \frac{1}{2}(\partial_{x} + i\partial_{y})$.  For a compactly supported smooth function $f$ on $\C$, $f_{\bar{z}} = \frac{1}{2}(f_x + i f_y)$.  Therefore, $| \nabla f | = 2| f_{\bar{z}} |$.  The Cauchy integral formula says 
\begin{align*}
f(w) = \frac{1}{2\pi i } \int_{\C} \frac{f_{\bar{z}}(z)}{z-w} dz \wedge d\bar z. 
\end{align*}
We then have

\begin{align*}
4\pi \int_{\C} f^2 dA &= 2\pi i \int_\C f^2(w) dw \wedge d \bar{w}\\
&= \int_{\C} f(w) \left( \int_{\C} \frac{f_{\bar z}(z)}{z-w} dz \wedge d\bar{z}\right) dw \wedge d\bar{w}\\
&= - \int_\C f_{\bar{z}}(z) \left( \int_\C f(w) \left( \frac{\bar{z}- \bar{w}}{z-w}\right)_{\bar w} dw \wedge d\bar{w}\right) dz \wedge d\bar{z}\\
&= \int_\C \int_\C f_{\bar{z}}(z) f_{\bar{w}}(w) \left( \frac{\bar{z}-\bar{w}}{z-w}\right) dw \wedge d\bar{w} \, dz \wedge d\bar{z}\\
&\leq \left( \int_{\C} |\nabla f| dA\right)^2, 
\end{align*}
where third equality uses that $\left( \frac{\bar{z} - \bar{w}}{z-w}\right)_{\bar{w}} = - \frac{1}{z-w}$ and Fubini's theorem, and in the fourth equality, we have integrated by parts (recall the identity $d\phi = \phi_{w} \ dw + \phi_{\bar{w}} \ d \bar{w}$ for the exterior derivative).


\section{Proof of Theorem \ref{Themisphere}}
\label{conv-dom}
\nopagebreak

In this section, we give a proof of Theorem \ref{Themisphere} in the spirit of Croke's arguments in \cite{CrokeEigen, Croke, Croke-hemisphere}.  For this, we assume $\Omega$ is strictly geodesically convex, in that for all $x,y \in \Omega$, the geodesic segment between $x$ and $y$ lies in the interior of $\Omega$, except possibly at its endpoints.  The surface $\Sigma$ can therefore be taken to coincide with $\Omega$.  

\begin{proof}[Proof of Theorem \ref{Themisphere}]
	
For $y \in \partial \Omega$ we introduce polar coordinates for $\Omega$ centered at $y$, as in Section \ref{dom-in-conv}.  In these coordinates, $\theta$ is the angle between the inward normal to $\partial \Omega$ at $y$ and the geodesic segment from $y$ to the point in question.  We let $\beta(\theta)$ be the point at which this geodesic segment again meets $\partial \Omega$, and we let $\rho(\theta) = r(y, \beta(\theta))$ be the length of this segment.  We let $\tau(\theta)$ be the speed of the path along $\partial \Omega$ parametrized by $\theta$.  We then have
\begin{equation}
\label{SE-Santalo}
\Ecal(\Omega) = L^{2} - \int_{\partial \Omega} \int_{-\frac{\pi}{2}}^{\frac{\pi}{2}} \langle \Rcap \nu_y,\nu_{\beta} \rangle \tau \ d\theta ds_y.
\end{equation}
Since $\Rcap \nu_y = \cos \theta \frac{\partial}{\partial r} + \frac{\sin \theta}{\sqrt{g}} \frac{\partial}{\partial \theta}$ and  $\nu_{\beta} = \frac{1}{\tau}\left( \sqrt{g} \frac{\partial}{\partial r} - \frac{\rho'(\theta)}{\sqrt{g}} \frac{\partial}{\partial \theta} \right)$, we have	
\begin{equation}
\label{SE-Santalo-formula}
\langle \Rcap \nu_y, \nu_{\beta} \rangle = \frac{1}{\tau}\left(\sqrt{g} \cos \theta - \rho'(\theta) \sin \theta \right).  
\end{equation}
Substituting this into (\ref{SE-Santalo}) gives
\begin{equation}
\label{SE-Santalo-int}
\Ecal(\Omega)= L^{2} + \int_{\partial \Omega} \int_{-\frac{\pi}{2}}^{\frac{\pi}{2}} \rho'(\theta) \sin \theta \ d\theta ds_y - \int_{\partial \Omega} \int_{-\frac{\pi}{2}}^{\frac{\pi}{2}} \sqrt{g} \cos \theta \ d\theta ds_y.  
\end{equation}
Santal\'o's formula (cf. \cite{CrokeEigen, Croke, Croke-hemisphere} and the sources therein) implies that	
\begin{equation}
\label{Santalo-eqn}
\int_{\partial \Omega} \int_{-\frac{\pi}{2}}^{\frac{\pi}{2}} \rho(\theta) \cos \theta \ d\theta ds_y = 2\pi A,
\end{equation}	
where $2\pi$ is the length of the unit tangent fibre at each point of $\Omega$.  Integrating by parts in the inner integral in the first integral expression in (\ref{SE-Santalo-int}), and noting that $\rho(-\frac{\pi}{2}) = \rho(\frac{\pi}{2}) = 0$, we have
\begin{equation}
\label{Santalo-aux}
\Ecal(\Omega) = L^{2} - 2\pi A - \int_{\partial \Omega} \int_{-\frac{\pi}{2}}^{\frac{\pi}{2}} \sqrt{g} \cos \theta \ d\theta ds_y,
\end{equation}	
which implies Theorem \ref{Themisphere}. 
\end{proof} 

The coordinate vector field $\frac{\partial}{\partial \theta}$ in the proof of Theorem \ref{Themisphere} is a Jacobi field along radial geodesic segments from $y$, and $\sqrt{g}(y,x)$ is the length of $\frac{\partial}{\partial \theta}$ at $x$.  The integral expression $\int_{\partial \Omega} \int_{-\frac{\pi}{2}}^{\frac{\pi}{2}} \sqrt{g} \cos \theta \ d\theta ds_y$ in (\ref{Santalo-aux}) therefore has a natural interpretation in terms of Jacobi fields and gives, together with $\Ecal(\Omega)$, a geometric description of the isoperimetric deficit $L^{2} - 2\pi A$.  

\begin{remark}
\label{Alt-hemi-exp}
Using as in the proof of \ref{Lcalc}(ii) that $\ddiv_y(\nabla_y r / \sqrt{g}) = 2\pi \delta_{x}$, a calculation similar to the one in the proof of Proposition \ref{Tdeficit} establishes that for any domain $\Omega$ as in Theorem \ref{Ciso}, 
\begin{align*}
\Ecal(\Omega) = L^2 - 2\pi A - \int_{y\in\partial \Omega} \int_{x\in \Omega} \Delta_x r \langle \nabla_y r, \nu_y\rangle dA_x ds_y. 
\end{align*}
It is also possible to prove \ref{Themisphere} using this: one integrates $\Delta_x r$ along the geodesics based at $y \in \partial \Omega$ as they sweep out $\Omega$, using the expression $\sqrt{g}_r / \sqrt{g}$ for $\Delta_x r$ in normal coordinates about $y$ as above.  In these coordinates, $\cos \theta = \langle \nabla_y r, \nu_y\rangle$, which is positive because $\Omega$ is convex. 
\end{remark}


\section{Domains with Vanishing Asymmetry}
\label{symmetry}
\nopagebreak

In this section, we prove Theorem \ref{Trigidity}: that domains with vanishing scattering energy are convex and $O(2)$-symmetric.  We note that a convex, $SO(2)$-invariant metric is also $O(2)$-invariant---one can see this by writing such a metric in an appropriate polar coordinate system.  

We adopt the following notation: given $x,y$ in the same component of $\partial \Omega$, we write $r^{\partial \Omega}(x, y)$ for their distance in $(\partial \Omega, g)$, i.e. the minimum length of a boundary arc between $x$ and $y$.  We continue to write $r(x, y)$ for their distance in $\Sigma$ and call the restriction of $r(x,y)$ to $\partial \Omega \times \partial \Omega$ the \emph{boundary distance function}.

\begin{lemma}
\label{Lasymzero}
Suppose $\Ecal(\Omega) = 0$.  	
\begin{enumerate}[label=\emph{(\roman*)}]
\item 
\label{Lasymzero-i}
$\Omega$ is geodesically convex---in particular, $\partial \Omega$ is connected. 
\item 
\label{Lasymzero-ii}
For all distinct $x, y\in \partial \Omega$, 

\begin{enumerate}[label=\emph{(\alph*)}]
\item 
\label{Lasym-ii-a}
The angles at $x$ and $y$ between $\partial \Omega$ and the geodesic chord joining $x$ and $y$ are equal. 

\item 
\label{Lasym-ii-b}
$r(x, y)$ depends only on $r^{\partial \Omega}(x,y)$. 

\item 
\label{Lasym-ii-c}
The angle in \ref{Lasym-ii-a} depends only on $r^{\partial \Omega}(x,y)$. 
\end{enumerate}

\item 
\label{Lasymzero-iii}
The geodesic curvature $\kappa$ of $\partial \Omega$ is a positive constant---in particular, $\Omega$ is strictly geodesically convex.    
\end{enumerate}
\end{lemma}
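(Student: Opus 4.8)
The plan is to exploit that the integrand in \eqref{EAindex} is nonnegative, so $\Ecal(\Omega)=0$ forces $\Rcap\nu_y=\nu_x$ for every pair $x,y\in\partial\Omega$; by continuity (together with the diagonal convention $(\Rcap\nu_y)(y)=\nu_y$) this identity holds pointwise. All three parts of the lemma are then to be extracted from this single pointwise identity, read through the coordinate formula $\Rcap\nu_y=\cos\theta\,\frac{\partial}{\partial r}+\frac{\sin\theta}{\sqrt{g}}\frac{\partial}{\partial\theta}$ already established in the proof of Lemma~\ref{Lcalc}, where $\theta$ is the angle at $y$ between the geodesic chord and the boundary normal.

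First I would prove \ref{Lasym-ii-a}. Comparing radial components in $\Rcap\nu_y=\nu_x$ gives $\langle\nu_x,\frac{\partial}{\partial r}\rangle=\cos\theta$, where $\frac{\partial}{\partial r}$ is the unit chord direction at $x$; since $\theta$ is by definition the corresponding angle at $y$, the chord makes equal angles with $\partial\Omega$ at its two endpoints. Next, parametrize a component of $\partial\Omega$ by arclength, $x=c(s)$, $y=c(t)$, and set $F(s,t):=r(c(s),c(t))$. The first variation of arclength gives $\partial_sF=\langle\nabla_xr,c'(s)\rangle$ and $\partial_tF=\langle\nabla_yr,c'(t)\rangle$, each the cosine of the angle between the boundary tangent and the chord. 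The equal-angle condition from \ref{Lasym-ii-a}, together with the orientation of $\partial\Omega$, forces these two cosines to be opposite, so $(\partial_s+\partial_t)F=0$. Hence $F$ depends only on $s-t$; since $F$ is symmetric and $\partial\Omega$ is a closed curve, $F$ in fact depends only on $r^{\partial\Omega}(x,y)$, which is \ref{Lasym-ii-b}. Differentiating, $\partial_sF$ depends only on $s-t$ as well, so the common angle does too, giving \ref{Lasym-ii-c}.

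For \ref{Lasymzero-iii}, I would expand the angle function from \ref{Lasym-ii-c} as $x\to y$ along $\partial\Omega$. Writing $u=s-t$, the chord--tangent angle $\arccos(\partial_sF)$ behaves like $\tfrac{\kappa}{2}\,u+O(u^2)$ near $u=0$, where $\kappa$ is the geodesic curvature at $y$. Since by \ref{Lasym-ii-c} this angle depends only on $u$ and not on the basepoint, its derivative at $u=0$ is basepoint-independent, so $\kappa$ is constant. Positivity then follows because the chord turns into $\Omega$ (equivalently, a closed curve of constant curvature $\kappa\le 0$ would be a geodesic or concave and could not bound such a domain in a surface with unique minimizing geodesics), giving strict geodesic convexity along $\partial\Omega$.

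The main obstacle is \ref{Lasymzero-i}: upgrading the local, infinitesimal information above to global geodesic convexity and connectedness of $\partial\Omega$. For convexity I would combine the constant positive geodesic curvature from \ref{Lasymzero-iii} with the uniqueness of minimizing geodesics in $\Sigma$ to conclude that no chord between boundary points leaves $\Omega$. For connectedness, I would run the first-variation argument on each boundary component separately, obtaining constant geodesic curvature on each, and then invoke Gauss--Bonnet: a domain with several boundary components cannot carry a boundary of everywhere-positive geodesic curvature. Keeping the signs and orientations consistent throughout, and making rigorous the passage from the infinitesimal identities to these global conclusions, is where the real work lies.
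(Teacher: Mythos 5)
Your treatment of \ref{Lasymzero-ii} is essentially the paper's: the pointwise identity $\Rcap \nu_y = \nu_x$ gives \ref{Lasym-ii-a}, the first variation of chord length gives $(\partial_s + \partial_t)F = 0$ and hence \ref{Lasym-ii-b}, and differentiating gives \ref{Lasym-ii-c} (your route to (c) is in fact slightly more direct than the paper's, which differentiates a two-parameter family of chords $\Theta(t-s, t+s+l,\cdot)$ and switches the order of differentiation). The constancy of $\kappa$ in \ref{Lasymzero-iii}, via the limit $\kappa = \lim_{t\searrow s}(\theta_1+\theta_2)/(t-s)$ together with (a) and (c), also matches the paper.

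The gap is part \ref{Lasymzero-i}, and it cannot be deferred to the end as you propose: the paper proves convexity and connectedness \emph{first} and uses them throughout. Statements \ref{Lasym-ii-b} and \ref{Lasym-ii-c} only make sense as stated once $\partial\Omega$ is known to be connected, and the positivity of $\kappa$ uses convexity to guarantee $\theta_1 \geq 0$. Your proposed derivation is circular: you get $\kappa>0$ from ``the chord turns into $\Omega$,'' which is precisely a local convexity statement---if $\partial\Omega$ had a concave arc, short geodesic chords between nearby points of that arc would leave $\Omega$ and the signed chord--tangent angle would be negative---and you then propose to deduce convexity from $\kappa>0$. The Gauss--Bonnet step for connectedness also does not close without curvature control, since $\int_\Omega K$ can be arbitrarily negative and so does not contradict $2\pi\chi(\Omega)\le 0$ for a multiply connected $\Omega$ with positively curved boundary. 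The missing idea is the paper's three-point argument, which uses only the hypothesis $\Ecal(\Omega)=0$: if $\Omega$ were not convex, some geodesic $\Gamma$ would meet $\partial\Omega$ in three points $x,y,z$, non-tangentially after a general-position perturbation; since $\Rcap$ sends the oriented angle $\alpha_p$ between $\dot\Gamma$ and $\nu_p$ to $\pi - \alpha_p$, the relations $\Rcap\nu_x=\nu_y$, $\Rcap\nu_y=\nu_z$, $\Rcap\nu_z=\nu_x$ force $\alpha_x=\alpha_y=\alpha_z=\pi/2$, i.e.\ tangency at all three points, a contradiction. You need this (or some substitute establishing convexity and connectedness up front) before the rest of your argument can run.
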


\begin{proof}

To establish {\em \ref{Lasymzero-i}}, suppose that $\Omega$ is not convex.  Then we can find a geodesic segment $\Gamma$ which has nonempty intersection with $\Omega$ and passes through (at least) three points $x, y, z \in \partial \Omega$.   By a general position argument, we may further suppose that $\Gamma$ is not tangent to $\partial \Omega$ at any of $x, y$ or $z$. 

By the definition of $\Rcap$, the oriented angle between $\dot{\Gamma}$ and $\nu_p$ is $\pi$ minus the angle between $\dot{\Gamma}$ and $\Rcap \nu_p$, where $p$ is any of $x$, $y$, and $z$ and $\Rcap$ is calculated at any other of $x$, $y$ and $z$.  But then the equations $\Rcap \nu_x = \nu_y, \Rcap \nu_y = \nu_z$, and $\Rcap \nu_z = \nu_x$, which hold because $\Ecal(\Omega) = 0$, together imply that $\Gamma$ is tangent to $\partial \Omega$ at $x$, $y$, and $z$.  This contradiction proves {\em \ref{Lasymzero-i}}. 

{\em \ref{Lasymzero-ii}\ref{Lasym-ii-a}} is just a restatement of the condition that $\Rcap \nu_y = \nu_x$. 
 
Now let $\sigma : \R\rightarrow \partial \Omega$ be a covering map which parametrizes $\partial \Omega$ by arclength and $T$ the smooth unit tangent field to $\partial \Omega$ satisfying $T_\sigma = \dot{\sigma}$.  Define a map $\Theta: \R^2 \times [0, 1] \rightarrow \Omega$, where $\Theta(a, b, \cdot)$ parametrizes with constant speed the geodesic segment between $\sigma(a)$ and $\sigma(b)$ when $\sigma(a)\neq \sigma(b)$ and $\Theta(a, b, \cdot) := \sigma(a)$ when $\sigma(a) = \sigma(b)$.   For a geodesic chord $\Theta(a, b, \cdot)$, let $\eta$ be the unit outward pointing conormal vector to the boundary.  By the first variation formula for arclength, for any $l \in (0, L(\partial \Omega)/2]$, 
\begin{align*}
\frac{\partial}{\partial t} L\big(\Theta(t, t+l, \cdot)\big) = \langle T_{\sigma(t)}, \eta_{\sigma(t)} \rangle + \langle T_{\sigma(t+l)}, \eta_{\sigma(t+l)} \rangle=0, 
\end{align*} 
where the second equality follows from {\em \ref{Lasym-ii-a}}.  This proves {\em \ref{Lasym-ii-b}}, since $\partial \Omega$ is connected. 

To prove {\em \ref{Lasym-ii-c}}, for $t\in \R$ and $l \in (0, L(\partial \Omega)/2]$, by the first variation formula, 
\begin{align*}
\frac{\partial }{\partial s} L\big( \Theta(t-s, t+s+l, \cdot)\big) &= - \langle T_{\sigma(t-s)},\eta_{\sigma(t-s)}\rangle + \langle T_{\sigma(t+s+l)},  \eta_{\sigma(t+s+l)}\rangle\\
& = -2\langle T_{\sigma(t-s)}, \eta_{\sigma(t-s)}\rangle
\end{align*}
for all $s$ close enough to $0$ that $\Theta(t-s , t+s+l,\cdot)$ is an immersion, where the second equality follows from {\em \ref{Lasym-ii-a}}.  By differentiating this equation with respect to $t$, switching the order of differentiation and using {\em \ref{Lasym-ii-b}}, we conclude that $\frac{\partial}{\partial t} \langle T_{\sigma(t-s)}, \eta_{\sigma(t-s)} \rangle = 0$, which implies {\em \ref{Lasym-ii-c}} since $\partial \Omega$ is connected. 

To establish {\em \ref{Lasymzero-iii}}, given $s\in \R$ and $t\in (s, s+L(\partial \Omega)/2)$, let $\theta_1(s, t)$ and $\theta_2(s, t)$ be the angles between the geodesic chord joining $\sigma(s)$ and $\sigma(t)$ and the portion of the boundary $\{ \sigma(r) : r\in (s, t)\}$ at $\sigma(s)$ and $\sigma(t)$ respectively.  In \cite[p. 72]{Willmore}, it is shown that
\begin{align}
\label{Ecurvature}
\kappa(\sigma(s)) = \lim_{t \searrow s} \frac{\theta_1(s, t) + \theta_2(s, t)}{t-s}.
\end{align}

By {\em \ref{Lasymzero-ii}\ref{Lasym-ii-a}}, $\theta_1(s,t) = \theta_2(s, t)$.  Moreover, {\em \ref{Lasymzero-ii}\ref{Lasym-ii-c}} implies that $\theta_1(s, t)$ depends only on $t-s$, so it follows from \eqref{Ecurvature} that $\kappa(\sigma(s))$ is constant.  By the convexity of $\Omega$, $\theta_1(s, t) \geq 0$, so $\kappa \geq 0$.  If $\kappa$ were zero, $\partial \Omega$ would be a closed geodesic, contradicting the geodesic convexity of $\Sigma$, so in fact $\kappa > 0$.	
\end{proof}

By Lemma \ref{Lasymzero}, $\partial \Omega$ is strictly convex and the boundary distance function is invariant under isometries of the boundary.  Arcostanzo-Michel  \cite{Michelrot} studied surfaces with the preceding properties and proved (within \cite{Michelrot}, see Lemma 2 in Section 3.2 and the Proposition in 3.3 for more details):

\begin{enumerate}[label={(\alph*)}]
\item 
\label{AM-a}
There is a rotationally symmetric metric $g_f = dr^2 + f(r)^2 d\theta^2$ on a disk $D$ which has the same boundary distance function as $\Omega$. 
\item 
\label{AM-b} 
If the metric on $\Omega$ is analytic or has nonpositive curvature, then $\Omega$ is isometric to the rotationally symmetric $(D, g_f)$. 
\end{enumerate} 

The proof of (a) in \cite{Michelrot} is constructive and explicitly writes the inverse function to $f$ as an integral defined in terms of the boundary distance function.  We are now ready to prove Theorem \ref{Trigidity}: 
\begin{proof}[Proof of Theorem \ref{Trigidity}]
We have assumed that $\Omega$ has no boundary conjugate points.  Together with strict geodesic convexity and the fact that $\partial \Omega$ has positive geodesic curvature, this implies that $\Omega$ is a \emph{simple manifold} in the terminology of Pestov-Uhlmann \cite{Uhlmann} and Wen \cite{Wen}.  Because $(\Omega, g)$ and $(D, g_f)$ have the same boundary distance function, it follows from Theorem 1.2 of \cite{Uhlmann} that $\Omega$ is isometric to $(D, g_f)$. 
\end{proof}

We note that the identity \eqref{EA} for $\Ecal(\Omega)$ used in the proof of Theorem \ref{Ciso} is similar to an integral-geometric identity used by Mukhometov \cite{Muhometov} to prove that simple, conformally Euclidean surfaces are boundary distance rigid within their pointwise-conformal class.  It would be interesting to know if there is a modification of $\Ecal(\Omega)$ which has applications to boundary or scattering rigidity.  

We conclude this section by noting that one can also establish the rotational symmetry of a domain $\Omega$ with $\Ecal(\Omega) = 0$ without drawing on the construction from \cite{Michelrot}: 

By \ref{Lasymzero}{\em \ref{Lasymzero-ii}\ref{Lasym-ii-b}} and either boundary rigidity \cite[Theorem 1.2]{Uhlmann} or scattering rigidity \cite[Theorem 1.7]{Wen} for simple surfaces, it follows that each isometry of $\partial \Omega$ extends to an isometry of $\Omega$.  Let $\sigma : \R\rightarrow \partial \Omega$ be a Riemannian covering map, as in the proof of Lemma \ref{Lasymzero}.  Given $\theta \in \R$,  define an isometry $R_\theta : \partial \Omega \rightarrow \partial \Omega$ by $R_\theta(\sigma(s)) = \sigma(s + \theta)$ for all $s\in \R$ and denote by the same symbol the extended isometry of $\Omega$. 

For $\epsilon>0$ small, fix the parallel hypersurface $C_\epsilon = \{x\in \Omega : d(x, \partial \Omega) = \epsilon\}$, $p\in C_\epsilon$, and $\theta \in \R$ so that $\theta/ L(C_\epsilon)$ is irrational.  Since $R_\theta$ restricts to an isometry of $C_\epsilon$ and $\{ R^n_\theta(p): n \in \N\}$ is dense in $C_\epsilon$, the Gaussian curvature is constant on $C_\epsilon$.  Therefore, the metric on the tubular neighborhoods $\{x \in \Omega : d(x, \partial \Omega)< \epsilon\}$ is rotationally symmetric for all small $\epsilon>0$. Finally, it is not difficult to see that the cut locus of $\partial \Omega$ is then a single point $p$ characterized by the property that $d(p, \partial \Omega) = \text{diam}(\Omega)/2$, so that $(\Omega, g)$ is rotationally symmetric.


\section{Higher Dimensions}
\label{highdim}
\nopagebreak

In this section, we briefly study the scattering energy of higher-dimensional domains, restricting our attention to subdomains of the model spaces of constant curvature.  Let $(M^n, g)$ be a domain in the hyperbolic space $\HH^n$, the Euclidean space $\R^n$ or the sphere $\Sph^n$ in which each pair of points is joined by a unique, minimizing geodesic and let $\Omega \subset M$ be a precompact domain with $C^2$ boundary. 

Given distinct $x, y \in M$, define $\Rcap : T_y M \rightarrow T_x M$ in the same way as in Section \ref{intro}.  Note then that for any $v_y \in T_y M$, we have 
\begin{align}
\label{ERr}
\Rcap v_y = \vec{v}_y +2 \langle v_y, \nabla_y r \rangle \nabla_x r,
\end{align}
where $\vec{v}_y$ is the parallel extension of $v_y$ along radial geodesics from $y$.  We write $\sqrt{g} = \sqrt{\det(g_{ij})}$ for the volume density function in normal coordinates centered at $x$ and evaluated at $y$.  Letting $f(x,y)= \sn_K r(x, y)$, where $K$ is the sectional curvature of $M$, we then have $\sqrt{g}(x,y) = f^{n-1}(x,y)$.  We write $f'$ for $\sn_K'(r(x,y))$. 

We now define a generalization of $\Ecal(\Omega)$ as follows: given $p \geq 2-n$, define 
\begin{align}
\label{Eecalh}
\Ecal_p(\Omega) := \frac{1}{2}\iint_{\partial \Omega \times \partial \Omega}\!  f^p | \nu_x- \Rcap \nu_y |^2  dx dy.
\end{align}
The case $p=0$ corresponds to $\Ecal(\Omega)$ as defined in \eqref{EAindex}.

\begin{lemma}
	\label{Ldimcalc}
	The following hold: 
	\begin{enumerate}[label=\emph{(\roman*)}]
		\item 
		\label{Ldimcalc-i}
		For distinct $x,y \in M$ and $v_y \in T_y M$, 
		\[ 
		\ddiv_x (\Rcap v_y) = (n-1) \frac{f' + 1}{f} \langle v_y, \nabla_y r\rangle.
		\] 
		\item 
		\label{Ldimcalc-ii} 
		For any $x,y \in M$,
		\begin{align*}
		\ddiv_y( f^{p-1} \nabla_y r ) =
		\begin{cases}
		|\partial B^n(1) | \delta_x &\quad p = 2-n,\\
		(n-2+p)f^{p-2} f' &\quad p>2-n,  
		\end{cases}
		\end{align*}
		where $|\partial B^n(1)|$ is the measure Euclidean unit sphere $\partial B^n(1)$.
	\end{enumerate}
\end{lemma}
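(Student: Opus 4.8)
The plan is to treat the two parts separately, reducing each to a product-rule computation together with a single nontrivial ingredient: the divergence on the round sphere $S^{n-1} \subset T_y M$.

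For part \ref{Ldimcalc-i} I would begin from the decomposition \eqref{ERr}, $\Rcap v_y = \vec{v}_y + 2\langle v_y, \nabla_y r\rangle \nabla_x r$, and compute $\ddiv_x$ of each summand in geodesic polar coordinates about $y$, in which the metric is $dr^2 + f(r)^2 d\sigma^2$ with $d\sigma^2$ the round metric on $S^{n-1}$ and $\nabla_x r = \partial_r$. The key elementary observation is that, since the initial direction $\omega \in S^{n-1}$ of the geodesic from $y$ to $x$ is constant along each radial ray, the function $\langle v_y, \nabla_y r\rangle = -\langle v_y, \omega\rangle$ is independent of $r$; hence the gradient term in $\ddiv_x(\langle v_y,\nabla_y r\rangle\partial_r)$ vanishes and this summand contributes $2\langle v_y,\nabla_y r\rangle \Delta_x r = 2(n-1)\tfrac{f'}{f}\langle v_y,\nabla_y r\rangle$, using $\Delta_x r = (n-1)f'/f$.

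The substance is the term $\ddiv_x \vec{v}_y$, which I would obtain from $\ddiv V = \tfrac{1}{\sqrt{g}}\partial_\mu(\sqrt{g}\,V^\mu)$ after splitting $\vec{v}_y$ into its $\partial_r$-part and its angular part. The radial component $\langle \vec{v}_y, \partial_r\rangle = \langle v_y,\omega\rangle$ is $r$-independent, so it contributes only $(n-1)\tfrac{f'}{f}\langle v_y,\omega\rangle$. For the angular part I would use that, in constant curvature, the Jacobi fields vanishing at $y$ are $f(r)$ times parallel fields; equivalently $\partial_{\theta^i} = f(r)\,P_{u_i}$, where $P_{u_i}$ is the parallel transport of $u_i = \partial_{\theta^i}\omega$. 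Consequently the angular part of $\vec{v}_y$ is $\tfrac{1}{f(r)}$ times the coordinate expression on $S^{n-1}$ of the tangential projection $W(\omega) := v_y - \langle v_y,\omega\rangle\omega$ of the constant vector $v_y$, and the angular piece of the divergence formula collapses to $\tfrac{1}{f}\ddiv_{S^{n-1}}W$. Since $W$ is the spherical gradient of the first harmonic $\omega \mapsto \langle v_y,\omega\rangle$, one has $\ddiv_{S^{n-1}} W = -(n-1)\langle v_y,\omega\rangle$. Adding the three contributions and rewriting $\langle v_y,\omega\rangle = -\langle v_y,\nabla_y r\rangle$ gives exactly $(n-1)\tfrac{f'+1}{f}\langle v_y,\nabla_y r\rangle$. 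I expect the identification of the angular divergence with the spherical divergence of a projected constant field to be the main obstacle; everything else is bookkeeping, and as a consistency check the case $n=2$ recovers Lemma \ref{Lcalc}(i).

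For part \ref{Ldimcalc-ii}, away from $x$ the field is smooth and the product rule gives $\ddiv_y(f^{p-1}\nabla_y r) = f^{p-1}\Delta_y r + \langle \nabla_y(f^{p-1}),\nabla_y r\rangle$. Using $\Delta_y r = (n-1)f'/f$ and $\nabla_y f = f'\nabla_y r$ (so $|\nabla_y r|=1$ gives $\langle \nabla_y(f^{p-1}),\nabla_y r\rangle = (p-1)f^{p-2}f'$), this equals $(n-2+p)f^{p-2}f'$ on $M \setminus \{x\}$. It then remains to analyze the singularity at $x$ by computing the flux of $f^{p-1}\nabla_y r$ through a geodesic sphere $\partial B_\varepsilon(x)$: since the outward normal is $\nabla_y r$ and the sphere has area $|\partial B^n(1)|\,f(\varepsilon)^{n-1}$, the flux equals $|\partial B^n(1)|\,f(\varepsilon)^{p+n-2}$. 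When $p > 2-n$ this tends to $0$ and $f^{p-2}$ is locally integrable near $x$, so no distributional mass appears and the formula holds as a function; when $p = 2-n$ the flux is identically $|\partial B^n(1)|$, and the standard Green's-identity argument (integrating against a test function over $M\setminus B_\varepsilon(x)$ and letting $\varepsilon \searrow 0$) produces the distributional term $|\partial B^n(1)|\,\delta_x$. This recovers $\ddiv_y(\nabla_y r/\sqrt{g}) = 2\pi\delta_x$ from Lemma \ref{Lcalc}(ii) and Remark \ref{Alt-hemi-exp} in the case $n=2$, $p=0$.
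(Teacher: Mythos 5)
Your proposal is correct and follows essentially the same route as the paper: for \ref{Ldimcalc-i} it uses the same decomposition \eqref{ERr} and the same radial/angular splitting of $\ddiv_x \vec{v}_y$ in spherical coordinates about $y$, with your spherical-harmonic justification merely making explicit the identity $\ddiv_{\Sph^{n-1}}(v_y^{\top}) = -(n-1)\langle v_y, \xi_x\rangle$ that the paper states without comment. For \ref{Ldimcalc-ii} the paper only asserts that the result is ``a calculation in spherical coordinates about $x$,'' and your product-rule computation together with the flux analysis through $\partial B_\varepsilon(x)$ correctly carries out that calculation, including the distributional term in the critical case $p = 2-n$.
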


\begin{proof}
	We first compute $\ddiv_x ( \vec{v}_y )$.  Take a system of spherical coordinates about $y$ in which $g = dr^2 + f^2 g_{\Sph^{n-1}}$, where $\Sph^{n-1}$ is the unit sphere in $T_y M$ and $g_{\Sph^{n-1}}$ is its round metric.  Let $v_y^{\top}$ be the vector field on $\Sph^{n-1}$ which results from the orthogonal projection of $v_y$ onto the tangent hyperplanes to $\Sph^{n-1}$.  Letting $\xi_x \in T_y M$ be the unit vector such that $exp_y \left( r(x,y) \xi_x \right) = x$, we then have $v_y = \langle v_y, \xi_x \rangle \xi_x + v_y^{\top}$.  Identifying $T_{\xi_x} \Sph^{n-1}$ with $\xi_x^{\perp} \subset T_y M$ and $T_x M$ with $T_{\xi_x} \Sph^{n-1} \oplus \text{span} \lbrace \frac{\partial}{\partial r} \rbrace$, we have 
	\[ \vec{v}_y  = \langle v_y , \xi_x \rangle \frac{\partial}{\partial r} + \frac{1}{f} v_y^{\top} = \langle \vec{v}_y , \nabla_x r \rangle \frac{\partial}{\partial r} + \frac{1}{f} v_y^{\top}. \]
	Then
	\begin{align*}
	\ddiv_x ( \vec{v}_y) &= (n-1) \frac{f'}{f} \langle \vec{v}_y, \nabla_x r\rangle + \frac{1}{f}\ddiv_{\Sph^{n-1}}( v_y^{\top})\\
	&= (n-1) \frac{f'}{f} \langle \vec{v}_y, \nabla_x r\rangle - (n-1) \frac{1}{f} \langle \vec{v}_y, \nabla_x r\rangle.
	\end{align*}
	We also have 
	\begin{align*}
	\ddiv_x\left( \langle v_y, \nabla_y r\rangle \nabla_x r\right) &= 
	\langle \nabla_x \langle \vec{v}_y, \nabla_y r \rangle , \nabla_x r \rangle + \langle \vec{v}_y, \nabla_y r \rangle \Delta_x r \\
	&= (n-1)\frac{f'}{f} \langle \vec{v}_y, \nabla_y r \rangle,
	\end{align*}
	where we have used that $\langle \vec{v}_y, \nabla_y r \rangle$ is constant along geodesics from $y$ to deduce that its gradient is orthogonal to $\nabla_x r$.  Combining the preceding with \eqref{ERr} proves {\em \ref{Ldimcalc-i}}.  
	
	The proof of {\em \ref{Ldimcalc-ii}} is a calculation in spherical coordinates about $x$. 
\end{proof}

We now state and prove the main theorem of this section, which generalizes integral identities of Banchoff-Pohl \cite{Banchoffpohl} and Gysin \cite{Gysin}.  In contrast to the two-dimensional case, these higher dimensional identities do not appear to be directly related to isoperimetry. 

\begin{theorem}
	\label{Theuc}
	Let $M$ and $\Omega$ be as above, with sectional curvature $K$. 
	\begin{enumerate}[label=\emph{(\roman*)}]
	\item
	\label{Theuc-i}	
	For $p = 2-n$, we have 
	\begin{align*}
	\Ecal_{2-n}(\Omega)= \iint_{\partial \Omega \times \partial \Omega} f^{2-n} dx dy - n |\partial B^n(1)| |\Omega| + K \iint_{\Omega \times \Omega} f^{2-n}\, dx dy.  
	\end{align*}
	\item 
	\label{Theuc-ii}	
	For $ p> 2-n$, we have 
	\begin{align*}
	\Ecal_p(\Omega) &= \iint_{\partial \Omega \times \partial \Omega} f^p\, dxdy - (n-1) (n-2+p) \iint_{\Omega\times \Omega} f' f^{p-2}\, dxdy \\ &\phantom{=} - (n-1+p)\iint_{\Omega \times \Omega} \left( (n-2+p) f^{p-2}-(n-1+p)Kf^p \, \right) dx dy.
	\end{align*}
	\end{enumerate}
\end{theorem}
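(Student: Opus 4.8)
The plan is to mimic the two-dimensional computation from Proposition~\ref{Tdeficit} exactly, replacing the two key calculus identities by their higher-dimensional analogues from Lemma~\ref{Ldimcalc}. Since $\Rcap \nu_y$ is a unit vector, expand $\tfrac{1}{2}|\nu_x - \Rcap\nu_y|^2 = 1 - \langle \Rcap\nu_y, \nu_x\rangle$ and insert the weight $f^p$. The goal is to integrate the inner product term twice, once in $x$ and once in $y$, using the divergence theorem both times.

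First I would integrate in $x$. Applying the divergence theorem to the vector field $f^p \Rcap\nu_y$ over $\Omega$, I need $\ddiv_x(f^p \Rcap\nu_y)$. By the product rule this is $f^p \ddiv_x(\Rcap\nu_y) + \langle \nabla_x f^p, \Rcap\nu_y\rangle$, and here I expect a small subtlety: one must combine the two terms cleanly. Using Lemma~\ref{Ldimcalc}\ref{Ldimcalc-i} for $\ddiv_x(\Rcap\nu_y)$ together with $\nabla_x f^p = p f^{p-1} f' \nabla_x r$ and the expansion \eqref{ERr} of $\Rcap\nu_y$, the two contributions should organize into a multiple of $f^{p-1}\langle\nabla_y r, \nu_y\rangle$ times a vector field in $y$. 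The aim is to recognize the result as $\langle f^{p-1}(\text{coefficient})\,\nabla_y r, \nu_y\rangle$ so that the second integration can again be set up as a divergence in $y$.

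Next I would integrate in $y$, applying the divergence theorem a second time and invoking Lemma~\ref{Ldimcalc}\ref{Ldimcalc-ii} to compute $\ddiv_y(f^{p-1}\nabla_y r)$. Here the case split $p = 2-n$ versus $p > 2-n$ enters, exactly as in the statement of the theorem: when $p = 2-n$ the divergence produces the Dirac term $|\partial B^n(1)|\,\delta_x$, which upon integration yields a boundary term and the bulk term $-n|\partial B^n(1)||\Omega|$ after accounting for the first integration, together with the curvature integral $K\iint f^{2-n}$; when $p > 2-n$ the divergence produces $(n-2+p)f^{p-2}f'$, giving the interior double integrals in \ref{Theuc-ii}. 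In both cases I would use Fubini to swap the order of integration and the relation $f'' = -Kf$ (the Jacobi equation for $\sn_K$) to rewrite any $f''$ or derivative-of-$f'$ terms as curvature terms, producing the factor $K f^p$.

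The main obstacle I anticipate is bookkeeping in the first integration: correctly assembling $f^p\ddiv_x(\Rcap\nu_y) + \langle\nabla_x f^p, \Rcap\nu_y\rangle$ into a single $y$-divergence-ready expression, since $\Rcap\nu_y$ has both a radial part $2\langle\nu_y,\nabla_y r\rangle\nabla_x r$ and a parallel-transport part $\vec\nu_y$, and the gradient $\nabla_x f^p$ interacts differently with each. Getting the coefficients $(n-1)$, $(n-2+p)$, and $(n-1+p)$ to line up is where the algebra is delicate; the rest is a mechanical application of Lemma~\ref{Ldimcalc} and the divergence theorem, in direct parallel with the proof of Proposition~\ref{Tdeficit}.
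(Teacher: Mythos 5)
Your plan is exactly the paper's proof: expand $\tfrac12|\nu_x-\Rcap\nu_y|^2=1-\langle\Rcap\nu_y,\nu_x\rangle$, compute $\ddiv_x(f^p\Rcap\nu_y)=f^{p-1}\bigl((n-1+p)f'+n-1\bigr)\langle\nu_y,\nabla_y r\rangle$ via Lemma~\ref{Ldimcalc}\ref{Ldimcalc-i} and \eqref{ERr}, then apply the divergence theorem in $y$ with Lemma~\ref{Ldimcalc}\ref{Ldimcalc-ii} and the case split at $p=2-n$. The only ingredient you leave implicit is that besides $f''=-Kf$ one also needs the first integral $(f')^2+Kf^2=1$ to turn the $(f')^2f^{p-2}$ term into the $(n-2+p)f^{p-2}-(n-1+p)Kf^p$ combination in \ref{Theuc-ii}.
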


\begin{proof}
	Using Lemma \ref{Ldimcalc}, for $y \in \partial \Omega$ we calculate 
	\begin{align*}
	\ddiv_x( f^p \Rcap \nu_y)  &= f^p \ddiv_x ( \Rcap \nu_y) + p f^{p-1}f' \langle \nabla_x r, \Rcap \nu_y\rangle\\
	&= f^p(n-1) \frac{f'+1}{f}\langle \nu_y , \nabla_y r \rangle + p f^{p-1}f' \langle \nu_y, \nabla_y r \rangle\\
	&= f^{p-1}((n-1+p)f' +n-1) \langle \nu_y , \nabla_y r \rangle .
	\end{align*}
	We have also
	\begin{align*}
	\ddiv_y( f^{p-1} f' \nabla_y r ) &= f' \ddiv_y (f^{p-1} \nabla_y r ) + f'' f^{p-1}\\
	&=  f' \ddiv_y (f^{p-1} \nabla_y r ) - K f^{p}.
	\end{align*}
	Using that $\frac{1}{2}|\nu_x - \Rcap \nu_y|^2 = 1 - \langle \Rcap\nu_y , \nu_x\rangle$ we have via the divergence theorem, Fubini's theorem, and the preceding that 
	\begin{align*}
	\Ecal_p(\Omega) &= \iint_{\partial \Omega \times \partial \Omega} f^p\, dx dy - \int_{x\in \Omega} \int_{y \in \partial \Omega} \ddiv_x ( f^p \Rcap \nu_y ) \,dy dx\\
	&=  \iint_{\partial \Omega \times \partial \Omega} f^p\, dx dy - (n-1) \iint_{\Omega \times \Omega} \ddiv_y ( f^{p-1} \nabla_y r ) \, dxdy \\ 
	&\phantom{=}- (n-1+p) \iint_{\Omega\times \Omega} \ddiv_y(f^{p-1} f' \nabla_y r )\, dx dy\\
	&= \iint_{\partial \Omega \times \partial \Omega} f^p \, dx dy  -(n-1)\iint_{\Omega\times \Omega} \ddiv_y(f^{p-1} \nabla_y r) \, dx dy\\
	&\phantom{=} - (n-1+p)\iint_{\Omega\times \Omega} \left( f' \ddiv_y(f^{p-1} \nabla_y r)- K f^p\, \right) dxdy  .
	\end{align*}
	When $p=2-n$, the result now follows by substituting Lemma \ref{Ldimcalc}{\em \ref{Ldimcalc-ii}} into the equation above, and noting that $f'(0) = 1$.
	When $p> 2-n$, we have by substituting from \ref{Ldimcalc}{\em \ref{Ldimcalc-ii}} that
	\begin{align*}
	\Ecal_p(\Omega) &= 
	\iint_{\partial \Omega \times \partial \Omega} f^p\, dxdy - (n-1) (n-2+p) \iint_{\Omega\times \Omega} f' f^{p-2}\, dxdy \\
	&\phantom{=} -
	(n-1+p)\iint_{\Omega \times \Omega} \left( (n-2+p)(f')^2 f^{p-2}-Kf^p \, \right) dx dy.
	\end{align*}
	The result now follows using that $(f')^2 + K f^2 = 1$. 
\end{proof}

\bibliographystyle{plain}
\bibliography{bibliography}

\begin{thebibliography}{10}

\bibitem{Michelrot}
Marc Arcostanzo and Ren\'{e} Michel.
\newblock M\'{e}triques de r\'{e}volution d'un disque et invariance par
  rotation de la longueur des g\'{e}od\'{e}siques.
\newblock {\em Geom. Dedicata}, 76(2):197--209, 1999.

\bibitem{Banchoffpohl}
Thomas~F. Banchoff and William~F. Pohl.
\newblock A generalization of the isoperimetric inequality.
\newblock {\em J. Differential Geometry}, 6:175--192, 1971/72.

\bibitem{Rado}
E.~F. Beckenbach and T.~Rad\'{o}.
\newblock Subharmonic functions and surfaces of negative curvature.
\newblock {\em Trans. Amer. Math. Soc.}, 35(3):662--674, 1933.

\bibitem{Fusco-Sph}
Verena B\"{o}gelein, Frank Duzaar, and Nicola Fusco.
\newblock A quantitative isoperimetric inequality on the sphere.
\newblock {\em Adv. Calc. Var.}, 10(3):223--265, 2017.

\bibitem{BogeleinHyperbolic}
Verena B\"{o}gelein, Frank Duzaar, and Christoph Scheven.
\newblock A sharp quantitative isoperimetric inequality in hyperbolic
  {$n$}-space.
\newblock {\em Calc. Var. Partial Differential Equations}, 54(4):3967--4017,
  2015.

\bibitem{Bol}
G.~Bol.
\newblock Isoperimetrische {U}ngleichungen f\"{u}r {B}ereiche auf
  {F}l\"{a}chen.
\newblock {\em Jber. Deutsch. Math.-Verein.}, 51:219--257, 1941.

\bibitem{Bonnesen}
T.~Bonnesen.
\newblock \"{U}ber das isoperimetrische {D}efizit ebener {F}iguren.
\newblock {\em Math. Ann.}, 91(3-4):252--268, 1924.

\bibitem{Chodosh}
Otis Chodosh, Max Engelstein, and Luca Spolaor.
\newblock The riemannian quantitative isoperimetric inequality, 2019.

\bibitem{CrokeEigen}
Christopher~B. Croke.
\newblock Some isoperimetric inequalities and eigenvalue estimates.
\newblock {\em Ann. Sci. \'{E}cole Norm. Sup. (4)}, 13(4):419--435, 1980.

\bibitem{Croke}
Christopher~B. Croke.
\newblock A sharp four dimensional isoperimetric inequality.
\newblock {\em Commentarii Mathematici Helvetici}, 59(1):187--192, 1984.

\bibitem{Croke-hemisphere}
Christopher~B. Croke.
\newblock A synthetic characterization of the hemisphere.
\newblock {\em Proc. Amer. Math. Soc.}, 136(3):1083--1086, 2008.

\bibitem{Fusco}
N.~Fusco, F.~Maggi, and A.~Pratelli.
\newblock The sharp quantitative isoperimetric inequality.
\newblock {\em Ann. of Math. (2)}, 168(3):941--980, 2008.

\bibitem{Gysin}
Liliana~M. Gysin.
\newblock The isoperimetric inequality for nonsimple closed curves.
\newblock {\em Proc. Amer. Math. Soc.}, 118(1):197--203, 1993.

\bibitem{Helein}
Fr\'{e}d\'{e}ric H\'{e}lein.
\newblock In\'{e}galit\'{e} isop\'{e}rim\'{e}trique et calibration.
\newblock {\em Ann. Inst. Fourier (Grenoble)}, 44(4):1211--1218, 1994.

\bibitem{Howard}
Ralph Howard.
\newblock The sharp {S}obolev inequality and the {B}anchoff-{P}ohl inequality
  on surfaces.
\newblock {\em Proc. Amer. Math. Soc.}, 126(9):2779--2787, 1998.

\bibitem{Kleiner1992}
Bruce Kleiner.
\newblock An isoperimetric comparison theorem.
\newblock {\em Inventiones mathematicae}, 108(1):37--48, 1992.

\bibitem{Prince}
Benoît Kloeckner and Greg Kuperberg.
\newblock The cartan-hadamard conjecture and the little prince, 2013.

\bibitem{McGrath}
Peter McGrath.
\newblock An isoperimetric deficit formula.
\newblock {\em J. Geom. Anal.}, 31(4):3273--3279, 2021.

\bibitem{Muhometov}
R.~G. Muhometov.
\newblock The reconstruction problem of a two-dimensional {R}iemannian metric,
  and integral geometry.
\newblock {\em Dokl. Akad. Nauk SSSR}, 232(1):32--35, 1977.

\bibitem{Uhlmann}
Leonid Pestov and Gunther Uhlmann.
\newblock Two dimensional compact simple {R}iemannian manifolds are boundary
  distance rigid.
\newblock {\em Ann. of Math. (2)}, 161(2):1093--1110, 2005.

\bibitem{Pleijel}
Arne Pleijel.
\newblock Zwei kurze beweise der isoperimetrischen ungleichung.
\newblock {\em Archiv der Mathematik}, 7(4):317--319, Oct 1956.

\bibitem{ToppingWente}
Peter Topping.
\newblock The optimal constant in {W}ente's {$L^\infty$} estimate.
\newblock {\em Comment. Math. Helv.}, 72(2):316--328, 1997.

\bibitem{Topping}
Peter Topping.
\newblock The isoperimetric inequality on a surface.
\newblock {\em manuscripta mathematica}, 100(1):23--33, 1999.

\bibitem{Weil}
Andr\'{e} Weil.
\newblock Sur les surfaces a courbure negative.
\newblock {\em C. R. Acad. Sci. Paris}, 182(4):1069--1071, 1926.

\bibitem{Wen}
Haomin Wen.
\newblock Simple {R}iemannian surfaces are scattering rigid.
\newblock {\em Geom. Topol.}, 19(4):2329--2357, 2015.

\bibitem{Willmore}
T.~J. Willmore.
\newblock {\em An introduction to differential geometry}.
\newblock Clarendon Press, Oxford, 1959 reprinted from corrected sheets of the
  first edition, 1959.

\bibitem{Yau}
Shing~Tung Yau.
\newblock Isoperimetric constants and the first eigenvalue of a compact
  {R}iemannian manifold.
\newblock {\em Ann. Sci. \'{E}cole Norm. Sup. (4)}, 8(4):487--507, 1975.

\end{thebibliography}

\end{document}